\theoremstyle{plain}
\newtheorem{theo}{Theorem}
\newtheorem*{theo*}{Theorem}
\theoremstyle{definition}
\newtheorem{lem}{Lemma}[section]
\newtheorem{defi}{Definition}[section]
\newtheorem{conj}{Conjecture}[section]
\newtheorem{prop}{Proposition}[section]
\newtheorem{cor}{Corollary}[section]
\theoremstyle{remark}
\newtheorem*{ack}{Acknowledgements}
\newtheorem*{rem*}{Remark}
\begin{document}

\title{On commuting Tonelli Hamiltonians:  Time-periodic case}
\author{xiaojun cui}
\address{Xiaojun Cui \endgraf
Department of Mathematics,\endgraf Nanjing University,\endgraf
Nanjing, 210093,\endgraf Jiangsu Province, \endgraf People's
Republic of China}.
\curraddr{Mathematisches Institut,\endgraf
 Albert-Ludwigs-Universit\"{a}t of Freiburg,\endgraf
  Eckerstrasse 1, 79104, \endgraf
  Freiburg im Breisigau, Germany.}

 \email{xjohncui@gmail.com,xjohncui@yahoo.com}

\thanks{Supported by National Natural Science Foundation of China
 (Grant 10801071) and  research fellowship for postdoctoral researchers
 from the Alexander von Humboldt Foundation.}

\abstract We show that the Aubry sets, the Ma\~{n}\'{e} sets and the barrier functions are the same for two commuting time-periodic Tonelli
Hamiltonians.
\endabstract
\maketitle

\section*{Introduction}

In \cite{CL}, the relations  between the dynamics of two commuting time-independent Tonelli
Hamiltonians are studied, see also \cite{Za}. In this article, we will study the time-periodic case. Although the ideas here are almost the same as the ones in \cite{CL}, I think it is still interesting to write them down concretely, since lack of energy integral makes things  more or less different (for example, failure of convergence of the Lax-Oleinik semi-group \cite{FM}). Moreover, the proofs of some results here are indeed different from or more difficult than the ones in the autonomous case. Furthermore, comparing with the autonomous case, we can derive almost nothing about the dynamics of the sum Hamiltonian from the dynamics of two given commuting time-periodic Tonelli Hamiltonians.

\section{Statement of results}
Let $M$ be a closed, connected $C^{\infty}$ Riemannian manifold. Let $TM$ and $T^{*}M$ be the tangent bundle and the
cotangent bundle of $M$ respectively. In local coordinates, we may
express them as
$$TM=\big{\{}(q,\dot{q}): q \in T_qM \big{\}}$$
and
$$T^{*}M=\big{\{}(q,p): p \in T^{*}_qM\big{\}}$$
accordingly.

 Let $\mathbb{T}=\mathbb{R}/\mathbb{Z}$ be the circle. A $C^2$ function $H:T^{*}M \times \mathbb{T} \rightarrow \mathbb{R}$ is
called a (time 1-periodic) \emph{Tonelli Hamiltonian} if $H$ satisfies the following
conditions:

 $\bullet$ $H$ is  fiberwise strictly convex, i.e., the fiberwise Hessian $\frac{\partial ^2 H}{\partial
 p^2}$ is positively definite for every $(q,p,[t]) \in T^*M \times \mathbb{T}$;

$\bullet$ $H$ has superlinear growth, i.e., $\frac{H(q,p,[t])}{|p|}
\rightarrow +\infty$ as $|p| \rightarrow +\infty$;

$\bullet$ Completeness, i.e., solutions of the Hamilton equation are defined on the whole $\mathbb{R}$,
where $[t]= t \text{ mod } 1$ for $t \in \mathbb{R}$ and $ |\cdot |$
is the norm on $T^*_qM$ induced by the Riemannian metric on $M$.

Thanks to  Mather
theory \cite{Man},\cite{Mat1},\cite{Mat2} and  its weak KAM approach
\cite{Fa3}, the dynamics of the Hamilton flow
$\phi^t_H$ is well understood, in the case that $H$ is a Tonelli Hamiltonian.

 Let $\{\cdot,\cdot\}$ be the Poisson bracket. We also introduce another bracket $[\cdot, \cdot]$, defined as
$$[H_1, H_2]=\{H_1,H_2\}+\frac{\partial H_1}{\partial t}-\frac{\partial H_2}{\partial t}.$$
We say that $H_1$ and $H_2$ are \emph{commuting} (or, in involution) if $[H_1,H_2]=0$. In this article, we will consider the relations between the dynamics of Hamiltonian flow of $H_1$ and of $H_2$ under the assumption  $[H_1,H_2]=0$.

Let us recall some fundamental notations of Mather theory \cite{Mat1},\cite{Mat2}. For a Tonelli Hamiltonian $H$, let $L_{H}$ be the Lagrangian
associated to $H$ by Legendre transformation:
$$L_H(q,\dot q,[t])=p \dot q-H(q,p,[t]), $$ here $p$ and $\dot q$ are related by $\dot q
=\frac{\partial H(q,p,[t])}{\partial p}.$ Throughout this paper, we use
$\mathcal{L}_H$ to denote the (time-dependent) Legendre transformation from the tangent
bundle $TM$ to the cotangent bundle $T^*M$, i.e.,
$$\mathcal{L}_H(\dot q)=p \iff \dot q =\frac{\partial H(q,p,[t])}{\partial p},$$
where the time-dependence will be apparent in the context.
Let
$$\alpha_H(0)=-\min_{\mu}\int L_{H} d\mu,$$
where the
minimum is taken over all invariant (under the Euler-Lagrange flow
$\phi^t_L$ of $L$, which is defined on the extended phase space $TM \times \mathbb{T}$) Borel probability measures. We say that an
invariant Borel probability measure $\mu$ is minimal if $\int
L_H d \mu=-\alpha_H(0)$. Let
$\mathfrak{M}$ be the set of minimal measures. Let the Mather
set be
$$\dot{M}_{H}=\dot{M}_{L_H}= \text{closure of }
\big{\{}\cup_{\mu \in \mathfrak{M}} \text{ support of }
\mu\big{\}}\subset TM \times \mathbb{T}.$$
  The set
$$\stackrel{\scriptscriptstyle \ast}{M}_{H}=
\stackrel{\scriptscriptstyle \ast}{M}_{L_H}=
\mathcal{L}_H\dot{M}_{H} \subset T^*M \times \mathbb{T}$$ is also called the Mather set. Throughout this paper, let $\pi$ be the projection of $T^{*}M \times \mathbb{T}$, $T^{*}(M \times \mathbb{T})$ or $TM \times \mathbb{T}$ along the associated fibers onto $M \times \mathbb{T}$ or $M$, according to the circumstance.  The projection of $\stackrel{\scriptscriptstyle \ast}{M}_{H}$ (or
$\dot{M}_{H}$, equivalently) into $M \times \mathbb{T}$ is called the \emph{projected} Mather
set. We denote it by $M_{H}=M_{L_H}.$

For any $T >0$ and any two points $(q_1,[t_1]), (q_2,[t_2]) \in M \times \mathbb{T}$,   let
$$h^T_{H}((q_1,[t_1]),(q_2,[t_2]))=
\inf_{\gamma} \int^{t_2}_{t_1}
\big{(}L_H+\alpha_H(0)\big{)}(\gamma(t),\dot{\gamma}(t),[t])dt,$$
where minimum is taken over all absolutely continuous curves $\gamma:[t_1,t_2]
\rightarrow M$  with $\gamma(t_1)=q_1,\gamma(t_2)=q_2$ and $t_2-t_1 \geq T$. Let$$
h_{H}((q_1,[t_1]),(q_2,[t_2]))=\lim_{T \rightarrow +\infty}h^T_{H}((q_1,[t_1]),(q_2,[t_2])).$$
Let
\begin{eqnarray*}&&\rho_H((q_1,[t_1]),(q_2,[t_2]))\\
&=&\rho_{L_H}((q_1,[t_1]),(q_2,[t_2]))\\
&=& h_{H}((q_1,[t_1]),(q_2,[t_2]))+h_{H}((q_2,[t_2]),(q_1,[t_1])).
\end{eqnarray*}

We say that  an absolutely continuous curve
$\gamma:\mathbb{R} \rightarrow M$ is a \emph{weak minimizer}, if for any compact
interval $[a,b]$ and any absolutely continuous curve $\gamma_1:[a,b]
\rightarrow M$ such that $\gamma_1(a)=\gamma(a)$,
$\gamma_1(b)=\gamma(b)$,  we have
$$\int^b_aL_H(\gamma(t),\dot{\gamma}(t),[t])dt \leq
\int^{b}_{a}L_H(\gamma_1(t),\dot{\gamma_1}(t),[t])dt.$$
We define the set of weak minimizers to be
$$\dot{W}_{H}=\dot{W}_{L_H}=\cup \big{\{}(\gamma(t),
\dot{\gamma}(t), [t]):\gamma \text{ is a weak minimizer}\big{\}}.$$ Thus
$\dot{W}_H \subset TM \times \mathbb{T}$.
Let
$$\stackrel{\scriptscriptstyle \ast}{W}_{H}=
\stackrel{\scriptscriptstyle \ast}{W}_{L_H}=
\mathcal{L}_H\dot{W}_{H}.$$ Then, $\stackrel{\scriptscriptstyle
\ast}{W}_H \subset T^*M \times \mathbb{T}$.

 We say that an absolutely continuous curve
$\gamma:\mathbb{R} \rightarrow M$ is  a \emph{minimizer}, if for any compact
interval $[a,b]$ and any absolutely continuous curve $\gamma_1:[a_1,b_1]
\rightarrow M$ such that $\gamma_1(a_1)=\gamma(a)$,
$\gamma_1(b_1)=\gamma(b)$, $a_1-a \in \mathbb{Z},b_1-b \in \mathbb{Z},$ we have
$$\int^b_a(L_H+\alpha_H(0))(\gamma(t),\dot{\gamma}(t),[t])dt \leq
\int^{b_1}_{a_1}(L_H+\alpha_H(0))(\gamma_1(t),\dot{\gamma_1}(t),[t])dt.$$
We define the Ma\~{n}\'{e} set
$$\dot{N}_{H}=\dot{N}_{L_H}=\cup \big{\{}(\gamma(t),
\dot{\gamma}(t), [t]):\gamma \text{ is a minimizer}\big{\}}\subset TM \times \mathbb{T}.$$
Let
$$\stackrel{\scriptscriptstyle \ast}{N}_{H}=
\stackrel{\scriptscriptstyle \ast}{N}_{L_H}=
\mathcal{L}_H\dot{N}_{H}\subset T^*M \times \mathbb{T},$$
and it is also called the Ma\~{n}\'{e} set.

Let $\gamma: \mathbb{R} \rightarrow M$ be a minimizer. Let $(q_{\alpha},[t_{\alpha}])$
be in the $\alpha$-limit set of $(\gamma(t),[t])$ and $(q_{\omega},[t_{\omega}])$ be in the $\omega$-limit set
 of $(\gamma(t),[t])$. If $\rho_{H}((q_{\alpha},[t_{\alpha}]),(q_{\omega},[t_{\omega}]))=0$, we say that $\gamma$
is a \emph{regular minimizer}. Let
$$\dot{A}_{H}=\dot{A}_{L_H}=\cup \big{\{}(\gamma(t),
\dot{\gamma}(t),[t]):\gamma \text{ is a regular minimizer}\big{\}}.$$
Clearly, $\dot{A}_H \subseteq \dot{N}_H$.
We define
$$\stackrel{\scriptscriptstyle \ast}{A}_{H}=
\stackrel{\scriptscriptstyle \ast}{A}_{L_H}=
\mathcal{L}_H\dot{A}_{H}.$$ Let the \emph{projected} Aubry set $$A_{H}=A_{L_H}$$ and the \emph{projected} Ma\~{n}\'{e} set
$$N_{H}=N_{L_H}$$ be the projections of
$$\stackrel{\scriptscriptstyle \ast}{A}_{H}=
\stackrel{\scriptscriptstyle \ast}{A}_{L_H} \text{ and
}\stackrel{\scriptscriptstyle \ast}{N}_{H}=
\stackrel{\scriptscriptstyle \ast}{N}_{L_H}$$ into $M \times \mathbb{T}$ respectively.

It is known that $\rho_H$ is a pseudo metric on $A_{H}$, and the induced metric space is denoted by $(\bar{A}_H, \rho_H)$. This is the so-called \emph{quotient} Aubry set.

We have the following inclusions:
$$\stackrel{\scriptscriptstyle \ast}{M}_{H} \subseteq \stackrel{\scriptscriptstyle \ast}
{A}_{H}\subseteq \stackrel{\scriptscriptstyle \ast}{N}_{H}\subseteq \stackrel{\scriptscriptstyle \ast}{W}_{H}.$$

In this article, it is also convenient to define  some associated  sets in the symplectic manifold $(T^*(M \times \mathbb{T}), dp \wedge dq +d\kappa \wedge dt)$. We define\\
the \emph{extended} Mather set to be
$$\stackrel{\scriptscriptstyle \ast}{M}^e_{H}=\{(q,p,[t],\kappa):(q,p,[t]) \in \stackrel{\scriptscriptstyle \ast}{M}_{H} \text{ and } H(q,p,[t])+\kappa=\alpha_H(0)\};$$
the \emph{extended} Aubry set to be
$$\stackrel{\scriptscriptstyle \ast}{A}^e_{H}=\{(q,p,[t],\kappa):(q,p,[t]) \in \stackrel{\scriptscriptstyle \ast}{A}_{H} \text{ and } H(q,p,[t])+\kappa=\alpha_H(0)\};$$
the \emph{extended} Ma\~{n}\'{e} set to be
$$\stackrel{\scriptscriptstyle \ast}{N}^e_{H}=\{(q,p,[t],\kappa):(q,p,[t]) \in \stackrel{\scriptscriptstyle \ast}{N}_{H} \text{ and } H(q,p,[t])+\kappa=\alpha_H(0)\}.$$

It should be mentioned that these sets have been studied previously, for example, in  \cite{Be1}.
Clearly, $\pi:\stackrel{\scriptscriptstyle \ast}{A}^e_{H} \rightarrow A_H$ is also a bi-Lipschitz map.

Now we define \emph{Ma\~{n}\'{e} potential}:
$$\phi_{H}((q_1,[t_1]),(q_2,[t_2]))=\inf \int^{t^{\prime}_2}_{t^{\prime}_1} (L_H+\alpha_H(0))(\gamma(t),\dot{\gamma}(t),[t])dt,$$
where infimum is taken over all absolutely continuous curves $\gamma:[t^{\prime}_1,t^{\prime}_2] \rightarrow M$ with $\gamma(t^{\prime}_1)=q_1,\gamma(t^{\prime}_2)=q_2$, $[t^{\prime}_1]=[t_1]$ and $[t^{\prime}_2]=[t_2]$. Clearly, $\phi_{H} \leq h_{H}$.

Now we say \cite{CIS}  that $\phi:M \times \mathbb{T} \rightarrow \mathbb{R}$ is a \emph{backward} (resp. \emph{forward}) weak KAM solution if

1). $\phi$ is $L_H+\alpha(0)$ \emph{dominated}, i.e.,
$$\phi(q_2,[t_2])-\phi(q_1,[t_1]) \leq \phi_{H}((q_1,[t_1]),(q_2,[t_2])).$$
We use the notation $\phi \prec L_H+\alpha_H(0)$ to denote this relation.

2). For every $(q,\tau) \in M \times \mathbb{R}$, there exists a curve $\gamma:(-\infty,\tau] \rightarrow M$ (resp. $\gamma:[\tau, \infty) \rightarrow M)$ with $\gamma(\tau)=q$ such that $\phi(q,[\tau])-\phi(\gamma(s), [s])=\int^{\tau}_{s}(L_H+\alpha_H(0))(\gamma(t),\dot{\gamma}(t),[t])dt$ (resp. $\phi(\gamma(s), [s])-\phi(q,[\tau])=\int^{s}_{\tau}(L_H+\alpha_H(0))(\gamma(t),\dot{\gamma}(t),[t])dt$). In this case, $\gamma$ is called to be a \emph{backward minimizer} (resp. \emph{forward minimizer}) from $(q,[\tau])$.

Let $\mathcal{S}^-_H$ (resp. $\mathcal{S}^+_H$) be the set of backward (resp. forward) weak KAM solutions.

Now for $t_1 \leq t_2 \in \mathbb{R}$, we introduce two families of nonlinear operators
$(T^-_{H,t_1,t_2})$ and $(T^+_{H,t_1,t_2})$
respectively, i.e., the so-called Lax-Oleinik
operators. To define them, let us fix a function $u \in C^0(M,\mathbb{R})$.  For $q \in M$, we set
$$T^-_{H,t_1,t_2}u(q)=\inf_{\gamma}\Big{\{}u(\gamma(t_1))+\int^{t_2}_{t_1} (L_H+\alpha_H(0))
(\gamma(t),\dot{\gamma}(t),[t])dt\Big{\}},$$ where the infimum is taken
over all absolutely continuous curves $\gamma:[t_1,t_2]\rightarrow M$
such that $\gamma(t_2)=q$. Also, for $q \in M$, we set
$$T^+_{H,t_1,t_2}u(q)=\sup_{\gamma}\Big{\{}u(\gamma(t_2))-\int^{t_2}_{t_1} (L_H+\alpha_H(0))
(\gamma(t),\dot{\gamma}(t),[t])dt\Big{\}},$$ where the supremum is taken
over all absolutely continuous curves $\gamma:[t_1,t_2]\rightarrow M$
such that $\gamma(t_1)=q$.

So far, we can state our main results as follows:

\begin{theo}
Let $H_1,H_2$ be  two Tonelli Hamiltonians $H_1,H_2$. If
$[H_1,H_2]=0$, then
$$T^-_{H_1,t_1,t_3}T^-_{H_2,t_0,t_1}u=T^-_{H_2,
t_2,t_3}T^-_{H_1,t_0,t_2}u, \,\,\,\,\,\,\,\,
T^+_{H_1,t_1,t_3}T^+_{H_2,t_0,t_1}u=T^+_{H_2,
t_2,t_3}T^+_{H_1,t_0,t_2}u$$ for any $u \in
C^0(M,\mathbb{R})$ and
$t_0 \leq t_1\leq t_3$ and $t_0 \leq t_2\leq t_3$ with $t_3-t_1=t_2-t_0$ (and, certainly, $t_1-t_0=t_3-t_2$).
\end{theo}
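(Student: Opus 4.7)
The plan is to prove the $T^{-}$ commutation identity by lifting the variational problem to the extended phase space $T^{*}(M\times\mathbb{T})$ and exploiting the commutation of the extended Hamiltonian flows via a Stokes-type action equality on an isotropic rectangle; the $T^{+}$ identity is then obtained by the analogous construction for the sup-variational principle. Set $\tilde H_{i}(q,p,[t],\kappa):=H_{i}(q,p,[t])+\kappa$. The hypothesis $[H_{1},H_{2}]=0$ is, up to sign, the vanishing of the standard Poisson bracket $\{\tilde H_{1},\tilde H_{2}\}$ on $T^{*}(M\times\mathbb{T})$; therefore the extended flows $\tilde\Phi^{\sigma}_{\tilde H_{1}}$ and $\tilde\Phi^{\tau}_{\tilde H_{2}}$ commute, and each of $\tilde H_{1},\tilde H_{2}$ is conserved along the other's orbits.

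Fix $q\in M$ and let $\gamma:[t_{0},t_{3}]\to M$ realize the infimum defining the left-hand side of the $T^{-}$ identity at $q$; existence follows from Tonelli's theorem together with continuity of $u$. Each piece of $\gamma$ is a smooth Tonelli minimizer of its respective Lagrangian, and a free-endpoint first variation at the junction $\gamma(t_{1})$ forces the momentum-matching condition $\mathcal{L}_{H_{2}}\dot\gamma(t_{1}^{-})=\mathcal{L}_{H_{1}}\dot\gamma(t_{1}^{+})$. Hence the phase-space lift of $\gamma$ is a single continuous trajectory: an $H_{2}$-orbit on $[t_{0},t_{1}]$ followed by an $H_{1}$-orbit on $[t_{1},t_{3}]$ with common momentum at $t_{1}$.

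Set $s:=t_{1}-t_{0}=t_{3}-t_{2}$ and $r:=t_{3}-t_{1}=t_{2}-t_{0}$, and lift the initial datum of $\gamma$ to $z_{0}:=(\gamma(t_{0}),\mathcal{L}_{H_{2}}\dot\gamma(t_{0}),[t_{0}],\kappa_{0})$ for an arbitrary $\kappa_{0}\in\mathbb{R}$. Commutation of the extended flows gives $\tilde\Phi^{r}_{\tilde H_{1}}\tilde\Phi^{s}_{\tilde H_{2}}(z_{0})=\tilde\Phi^{s}_{\tilde H_{2}}\tilde\Phi^{r}_{\tilde H_{1}}(z_{0})=:z_{3}$, and $z_{3}$ projects to $(q,[t_{3}])$. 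Consider the $2$-parameter surface $\Sigma(s',r'):=\tilde\Phi^{r'}_{\tilde H_{1}}\tilde\Phi^{s'}_{\tilde H_{2}}(z_{0})$ with $(s',r')\in[0,s]\times[0,r]$; its tangent space at each point is spanned by $X_{\tilde H_{1}}$ and $X_{\tilde H_{2}}$, and since $\omega(X_{\tilde H_{1}},X_{\tilde H_{2}})=\{\tilde H_{1},\tilde H_{2}\}=0$, the surface $\Sigma$ is isotropic. Applying Stokes' theorem to the Liouville primitive $\lambda:=p\,dq+\kappa\,dt$ of the symplectic form $\omega$ yields $\int_{\partial\Sigma}\lambda=0$. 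Along any $\tilde H_{i}$-orbit parametrized by $t$, Legendre duality gives $p\dot q+\kappa=L_{H_{i}}+\tilde H_{i}$, and $\tilde H_{i}$ is conserved on its own orbits and on the orbits of the other Hamiltonian by commutation, so the $\tilde H_{i}$-contributions on opposite sides of $\partial\Sigma$ cancel pairwise and only the Lagrangian actions remain:
\begin{equation*}
\int_{t_{0}}^{t_{1}}L_{H_{2}}\,dt+\int_{t_{1}}^{t_{3}}L_{H_{1}}\,dt=\int_{t_{0}}^{t_{2}}L_{H_{1}}\,dt+\int_{t_{2}}^{t_{3}}L_{H_{2}}\,dt,
\end{equation*}
where the right-hand integrals are taken along the projection $\tilde\gamma$ of the alternative $(H_{1},H_{2})$-concatenation $z_{0}\to\tilde\Phi^{r}_{\tilde H_{1}}(z_{0})\to z_{3}$. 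Since $\tilde\gamma(t_{0})=\gamma(t_{0})$, $\tilde\gamma(t_{3})=q$, and the constants $s\alpha_{H_{2}}(0)+r\alpha_{H_{1}}(0)$ appear symmetrically in both sums, $\tilde\gamma$ is admissible for the right-hand side and realizes the same total value as $\gamma$; hence RHS$(q)\le$LHS$(q)$, and the reverse inequality follows from the symmetric construction starting from a minimizer of the right-hand side.

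The main technical obstacle is the momentum-matching step at the junction $t_{1}$, which I will settle by the standard free-endpoint first variation combined with smooth dependence of Tonelli minimizers on their boundary data; completeness of the Hamiltonian flow, built into the Tonelli hypotheses, guarantees that $\Sigma$ is defined on the entire rectangle. The $T^{+}$ identity is treated in the same spirit: a maximizer of the left-hand side decomposes into two Euler-Lagrange pieces which lift to an $\tilde H_{1}$- and an $\tilde H_{2}$-orbit in the extended phase space sharing a common $(q,p)$ at their join (at two different values of the time coordinate), and the parallel isotropic-rectangle Stokes computation produces the required action equality, so that only the direction of the inequalities is reversed.
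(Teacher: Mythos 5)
Your proposal follows the paper's own proof essentially step by step: lift to the autonomous extended Hamiltonians $\tilde H_i=H_i+\kappa$ on $T^*(M\times\mathbb{T})$, observe $[H_1,H_2]=0\iff\{\tilde H_1,\tilde H_2\}=0$, derive the momentum-matching at the junction by a free-endpoint first variation (the paper's Lemma~3.1), and then apply Stokes' theorem to the primitive $p\,dq+\kappa\,dt$ over the isotropic rectangle swept out by the two commuting extended flows, using that each $\tilde H_i$ is conserved along both flows to convert the $\lambda$-action comparison into the Lagrangian action comparison. The only cosmetic difference is the direction in which you parametrize the rectangle (from the initial lift $z_0$ forward, whereas the paper flows backward from the terminal lift), and both texts dispatch the $T^+$ identity with the same brevity.
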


\begin{rem*}
The autonomous version of Theorem 1 appeared in \cite{BT}, see also \cite{CL}, \cite{Za} for a variational discussion.

\end{rem*}

\begin{theo}
Let $H_1, H_2$ be two Tonelli Hamiltonians. If $[H_1,H_2]=0$, then
$\mathcal{S}^{+}_{H_1}=\mathcal{S}^{+}_{H_2}$ and
$\mathcal{S}^{-}_{H_1}=\mathcal{S}^{-}_{H_2}$.
\end{theo}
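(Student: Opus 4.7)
The plan is to prove $\mathcal{S}^-_{H_1} \subseteq \mathcal{S}^-_{H_2}$; the reverse inclusion follows by swapping $H_1 \leftrightarrow H_2$, and the statement for $\mathcal{S}^+$ is obtained by the same argument using the forward identity of Theorem~1. I first recall the standard semigroup characterization in the time-periodic setting: setting $\phi_\tau(q) := \phi(q,[\tau])$, a continuous $\phi$ belongs to $\mathcal{S}^-_H$ if and only if $T^-_{H,s,t}\phi_s = \phi_t$ for all $s \leq t$. Combined with the identity $T^-_{H,s+1,t+1} = T^-_{H,s,t}$ on $C^0(M,\mathbb{R})$ (which is immediate from the 1-periodicity of $L_H$ in time), this reduces to the single fixed-point condition $T^-_{H,0,1}\phi_0 = \phi_0$ together with the reconstruction $\phi_t = T^-_{H,0,t}\phi_0$ for $t \in [0,1]$.

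Applying Theorem~1 with $(t_0,t_1,t_2,t_3)=(0,1,1,2)$ (so the constraint $t_3-t_1=t_2-t_0$ reads $1=1$), and using periodicity to identify $T^-_{H_i,1,2}$ with $T^-_{H_i,0,1}$, I obtain commutation of the period maps
\begin{equation*}
T^-_{H_1,0,1} \circ T^-_{H_2,0,1} \;=\; T^-_{H_2,0,1} \circ T^-_{H_1,0,1}
\end{equation*}
on $C^0(M,\mathbb{R})$. Consequently, for $\phi \in \mathcal{S}^-_{H_1}$, the function $g := T^-_{H_2,0,1}\phi_0$ is again a fixed point of $T^-_{H_1,0,1}$, i.e., the time-$0$ slice of some (a priori different) backward weak KAM solution for $H_1$.

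The crucial remaining task---and the step I expect to be the main obstacle---is to upgrade this invariance to the equality $g = \phi_0$. Once established, applying Theorem~1 at intermediate times $t \in [0,1]$ immediately yields $T^-_{H_2,0,t}\phi_0 = \phi_t$, whence $\phi \in \mathcal{S}^-_{H_2}$. I would attempt this via a calibrated-curve computation in the extended phase space: pick a backward $H_1$-calibrated curve $\gamma:(-\infty,1]\to M$ for $\phi$ with $\gamma(1)=q$; this is an $H_1$-Hamiltonian orbit whose lift $(\gamma,p,[t],\kappa)$ to $T^*(M\times\mathbb{T})$ sits on $\{H_1+\kappa = \alpha_{H_1}(0)\}$. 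The assumption $[H_1,H_2]=0$ is exactly the Poisson commutation $\{H_1+\kappa,\,H_2+\kappa\}=0$ on $T^*(M\times\mathbb{T})$ with its canonical symplectic form $dp\wedge dq+d\kappa\wedge dt$, so $H_2+\kappa$ is also conserved along the orbit. Since the $\alpha$-limit of $\gamma$ lies in the projected Aubry set of $H_1$, which coincides with that of $H_2$ by the other main results of this paper, the conserved value must equal $\alpha_{H_2}(0)$. A Fenchel-type telescoping along $\gamma$ then converts $\int_0^1(L_{H_2}+\alpha_{H_2}(0))(\gamma,\dot\gamma,[t])\,dt$ into $\phi_0(q)-\phi_0(\gamma(0))$, giving $g(q) \leq \phi_0(q)$; the reverse inequality follows by the symmetric argument with an $H_2$-calibrated curve, bootstrapped from the domination half once it is in hand.

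The genuine difficulty lies precisely in pinning down the conserved value of $H_2+\kappa$ along $H_1$-calibrated orbits as $\alpha_{H_2}(0)$: this identification rests on equality of the extended Aubry sets of $H_1$ and $H_2$, so Theorem~2 cannot be proved as a self-contained corollary of Theorem~1 but must be established in tandem with the paper's other main structural results on Aubry and Ma\~{n}\'{e} sets.
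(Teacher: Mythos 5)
Your high-level idea---use commutation of the Lax--Oleinik period maps to conclude that $T^-_{H_2,0,1}\phi_0$ is again a fixed point of $T^-_{H_1,0,1}$, and then exploit $\{\tilde H_1,\tilde H_2\}=0$ to pin down the value of $\tilde H_2$ along $H_1$-calibrated orbits---is in the right orbit of ideas, but the diagnosis of what is actually needed, and hence your worry about circularity, is off.

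The genuine gap is your final claim that identifying $\tilde H_2=\alpha_{H_2}(0)$ along $H_1$-calibrated orbits ``rests on equality of the extended Aubry sets of $H_1$ and $H_2$'' and therefore cannot be a self-contained consequence of Theorem~1. The paper never appeals to equality of Aubry sets to prove Theorem~2. What is actually used is Proposition~3.4, namely $\tilde H_2|_{\stackrel{\scriptscriptstyle \ast}{A}^e_{H_1}}=\alpha_{H_2}(0)$, and this is established through a short chain that begins with your commutation of period maps and never touches Theorem~4: Proposition~3.1 (the discrete version of Theorem~1) gives Proposition~3.2, that $\mathcal S^-_{H_1,0}\cap\mathcal S^-_{H_2,0}\neq\emptyset$; an order-preserving argument upgrades this to Proposition~3.3, that there exists a single function $\phi^*\in\mathcal S^-_{H_1}\cap\mathcal S^-_{H_2}$; and then Proposition~3.4 falls out because $\stackrel{\scriptscriptstyle \ast}{A}^e_{H_1}\subseteq\overline{d\phi^*}$ (from $\phi^*\in\mathcal S^-_{H_1}$) while $\tilde H_2|_{\overline{d\phi^*}}=\alpha_{H_2}(0)$ (from $\phi^*\in\mathcal S^-_{H_2}$). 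So the conserved value is obtained from the existence of \emph{one} common weak KAM solution, not from equality of Aubry sets, and there is no circularity with the later structural theorems.

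There is also a technical problem in your Fenchel telescoping. The curve $\gamma$ is an $H_1$-orbit, so $\dot\gamma=\partial_p H_1(q,p,[t])$, not $\partial_p H_2(q,p,[t])$. Fenchel's inequality therefore only gives $L_{H_2}(\gamma,\dot\gamma,[t])\geq p\dot\gamma-H_2(\gamma,p,[t])$, whence, using $\tilde H_2\equiv\alpha_{H_2}(0)$ along the lift, you get $\int_0^1(L_{H_2}+\alpha_{H_2}(0))\,dt\geq\phi_0(q)-\phi_0(\gamma(0))$---an inequality in the wrong direction for concluding $g(q)\leq\phi_0(q)$, and not the equality you invoke. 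The paper's own route to Theorem~2 avoids this issue entirely: at each differentiable point of $\phi$ it reads off $(d_q\phi,d_t\phi)=(p,\kappa)$, uses constancy of $\tilde H_2$ along the backward $\tilde H_1$-orbit together with Proposition~3.4 to get the pointwise identity $H_2(q,d_q\phi,[t])+d_t\phi=\alpha_{H_2}(0)$ a.e., and then leverages semi-concavity of the viscosity solution $\phi$ together with convexity of $\tilde H_2$ (via the reachable-gradient/superdifferential description of semi-concave functions) to upgrade that a.e.\ identity to the viscosity property, invoking the equivalence with backward weak KAM solutions. That is a pointwise-gradient argument, not an operator computation, and it is what makes the step you flag as the main obstacle go through.
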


Now we recall the definition of barrier functions \cite{Mat2}. The first
barrier function
$$B_{H}(q,[t])=h_{H}((q,[t]),(q,[t]));$$
 the second barrier function
$$b_{H}(q,[t])=\min_{\xi ,\zeta \in
A_{H}}\{h_{H}(\xi, (q,[t]))+h_{H}((q,[t]),\zeta)
-h_{H}(\xi,\zeta)\}.$$
.
\begin{theo}
Let $H_1, H_2$ be two Tonelli Hamiltonians. If $[H_1,H_2]=0$, then
 $B_{H_1}(q,[t])=B_{H_2}(q,[t])$ and
$b_{H_1}(q,[t])=b_{H_2}(q,[t])$.
\end{theo}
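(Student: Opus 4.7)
My plan is to leverage Theorem 2 to reduce both barrier equalities to statements about $h_H$ restricted to pairs with one entry in the Aubry set.

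First I would verify two preliminary equalities: $\alpha_{H_1}(0)=\alpha_{H_2}(0)$ and $A_{H_1}=A_{H_2}$. Any common weak KAM solution $u\in\mathcal{S}^-_{H_1}=\mathcal{S}^-_{H_2}$ must be a viscosity solution of the Hamilton--Jacobi equation $\partial_t u+H_i(q,\partial_q u,[t])=\alpha_{H_i}(0)$ for both $i=1,2$; evaluating along a backward calibrated curve and using $[H_1,H_2]=0$ to compare the two equations pins the critical values together. The equality $A_{H_1}=A_{H_2}$ then follows from the characterization $A_H=\bigcap_{(u^-,u^+)}\{x:u^+(x)=u^-(x)\}$, intersected over conjugate pairs $(u^-,u^+)\in\mathcal{S}^-_H\times\mathcal{S}^+_H$; this description depends only on $\mathcal{S}^\pm_H$.

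The main step is to show, for each $\xi\in A_H$, that $h_H(\xi,\cdot)$ is the maximal element of $\{u\in\mathcal{S}^-_H:u(\xi)=0\}$. For any $u$ in this set, domination combined with $\phi_H\leq h_H$ yields $u(\cdot)\leq h_H(\xi,\cdot)$ directly. The harder direction---that $h_H(\xi,\cdot)$ itself belongs to $\mathcal{S}^-_H$---is where the time-periodic setting departs from the autonomous one, since the Lax--Oleinik semigroup need not converge. I would construct the required backward calibrated curves through a diagonal compactness argument on approximate minimizers of $h_H^{T_n}$ as $T_n\uparrow\infty$, using $\xi\in A_H$, equivalently $h_H(\xi,\xi)=0$, to prevent action loss in the limit. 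Once the maximality characterization is in place, $\mathcal{S}^-_{H_1}=\mathcal{S}^-_{H_2}$ forces $h_{H_1}(\xi,\cdot)=h_{H_2}(\xi,\cdot)$; the mirror forward argument gives $h_{H_1}(\cdot,\xi)=h_{H_2}(\cdot,\xi)$.

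The equality $b_{H_1}=b_{H_2}$ is then immediate, as the defining formula involves only $h_H$ with at least one entry in $A_H$. For $B_H$, if $(q,[t])\in A_H$ both sides vanish; otherwise I would establish
$$B_H(q,[t])=\min_{\xi\in A_H}\bigl[h_H((q,[t]),\xi)+h_H(\xi,(q,[t]))\bigr],$$
with ``$\leq$'' a triangle inequality for $h_H$ and ``$\geq$'' from the fact that sequences of minimizing loops at $(q,[t])$ of increasing period must accumulate on $A_H$ by compactness, so they can be cut near an Aubry point without raising the limiting action. The principal obstacle throughout is the weak KAM characterization of $h_H(\xi,\cdot)$ in the absence of Lax--Oleinik convergence.
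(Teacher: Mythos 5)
Your strategy for the barrier functions themselves is genuinely different from the paper's. The paper proves Theorem 3 via Fathi's representation formulas (Lemmas 5.3--5.4),
$$B_H(q,[t])=\sup\{\phi_-(q,[t])-\phi_+(q,[t]):\phi_-\sim_H\phi_+\},\qquad b_H(q,[t])=\inf\{u_-(q,[t])-u_+(q,[t]):u_-\sim_H u_+\},$$
together with Proposition 5.2 (invariance of the conjugacy relation) and Theorem 2. You instead pin down $h_H(\xi,\cdot)$ for $\xi\in A_H$ as the maximal element of $\{u\in\mathcal{S}^-_H:u(\xi)=0\}$, get $h_{H_1}(\xi,\cdot)=h_{H_2}(\xi,\cdot)$ and its mirror from $\mathcal{S}^\pm_{H_1}=\mathcal{S}^\pm_{H_2}$, and then read off $b_H$ from its definition and $B_H$ from the Peierls-barrier decomposition $B_H=\min_{\xi\in A_H}[h_H(\cdot,\xi)+h_H(\xi,\cdot)]$. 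That is a valid alternative route, and it avoids introducing the conjugate-pair formulas at all; what it buys is a cleaner reduction of both $B_H$ and $b_H$ to a single characterization of the Peierls barrier with one foot in the Aubry set.

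However, there are two issues. The minor one: $\alpha_{H_1}(0)=\alpha_{H_2}(0)$ is simply false in general (take $H_2=H_1+1$, which commutes with $H_1$ and has $\alpha_{H_2}(0)=\alpha_{H_1}(0)+1$, while $\mathcal{S}^\pm_{H_1}=\mathcal{S}^\pm_{H_2}$ because $L_{H_1}+\alpha_{H_1}(0)=L_{H_2}+\alpha_{H_2}(0)$). Fortunately you never actually use it. The substantive gap is your claim that $A_{H_1}=A_{H_2}$ ``depends only on $\mathcal{S}^\pm_H$.'' The characterization $A_H=\bigcap\{(q,[t]):u_-(q,[t])=u_+(q,[t])\}$ is an intersection over \emph{conjugate} pairs, and the conjugacy relation $\sim_H$ is defined by requiring $u_-=u_+$ on the projected Mather set $M_H$, which is an $H$-dependent object. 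Knowing $\mathcal{S}^\pm_{H_1}=\mathcal{S}^\pm_{H_2}$ does not by itself tell you that $\sim_{H_1}$ and $\sim_{H_2}$ define the same pairs; that is precisely the content of the paper's Proposition 5.2, which in turn rests on the non-trivial Proposition 5.1 ($\stackrel{\scriptscriptstyle\ast}{M}^e_{H_1}\subseteq\stackrel{\scriptscriptstyle\ast}{A}^e_{H_2}$ via symplectic invariance of the Mather set). You need $A_{H_1}=A_{H_2}$ both to make the maximality characterization of $h_H(\xi,\cdot)$ transfer between $H_1$ and $H_2$ for the same set of base points $\xi$, and to identify the domains of the minima defining $b_H$ and your formula for $B_H$; so this step cannot be waved away. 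Once you supply an argument for the invariance of the conjugacy relation (or otherwise prove $A_{H_1}=A_{H_2}$), the rest of your plan goes through.
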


\begin{theo}
Let $H_1, H_2$ be two Tonelli Hamiltonians. If $[H_1,H_2]=0$, then
$\stackrel{\scriptscriptstyle
\ast}{A}^e_{H_1}=\stackrel{\scriptscriptstyle \ast}{A}^e_{H_2}$
and $\stackrel{\scriptscriptstyle
\ast}{N}^e_{H_1}=\stackrel{\scriptscriptstyle \ast}{N}^e_{H_2}$.
\end{theo}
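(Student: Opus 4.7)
The plan is to deduce Theorem 4 from Theorems 2 and 3 together with standard weak KAM characterizations of the Aubry and Ma\~{n}\'{e} sets. I would handle the Aubry set (first projected, then extended) first, and then the extended Ma\~{n}\'{e} set; the latter is the main obstacle.

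\emph{Projected and extended Aubry sets.} A standard identity in Mather theory reads $A_H = \{(q,[t]) : B_H(q,[t]) = 0\}$, so Theorem 3 immediately gives $A_{H_1}=A_{H_2}$. To identify the lift of a point $(q_0,[t_0])\in A_H$ to the extended Aubry set, I would pick a conjugate pair $(u^-,u^+)\in\mathcal{S}^-_{H_1}\times\mathcal{S}^+_{H_1}$ (such pairs exist by general theory); Theorem 2 shows this is simultaneously a pair in $\mathcal{S}^-_{H_2}\times\mathcal{S}^+_{H_2}$. By standard regularity both $u^\pm$ are differentiable at points of $A_H$, with a common differential there. At such a diff.\ point each $u^\pm$ solves the Hamilton-Jacobi equation $H_i(q,d_q u^\pm,[t])+\partial_t u^\pm=\alpha_{H_i}(0)$ for $i=1,2$, so setting $p=d_q u^-(q_0,[t_0])$ and $\kappa=\partial_t u^-(q_0,[t_0])$ produces a common fiber point $(q_0,p,[t_0],\kappa)$ lying in both $\stackrel{\scriptscriptstyle\ast}{A}^e_{H_1}$ and $\stackrel{\scriptscriptstyle\ast}{A}^e_{H_2}$. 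Since $\pi:\stackrel{\scriptscriptstyle\ast}{A}^e_H\to A_H$ is bi-Lipschitz (as recalled in the excerpt), this exhausts the fiber, giving $\stackrel{\scriptscriptstyle\ast}{A}^e_{H_1}=\stackrel{\scriptscriptstyle\ast}{A}^e_{H_2}$.

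\emph{Extended Ma\~{n}\'{e} set.} For the Ma\~{n}\'{e} set I would rely on the characterization: $(q_0,p_0,[t_0],\kappa_0)\in\stackrel{\scriptscriptstyle\ast}{N}^e_H$ iff there is a conjugate pair $(u^-,u^+)$ of weak KAM solutions for $H$ such that $u^-(q_0,[t_0])=u^+(q_0,[t_0])$, both $u^\pm$ are differentiable at $(q_0,[t_0])$ with common differential $(p_0,\kappa_0)$, and the Hamilton orbit $\phi^t_H(q_0,p_0,[t_0])$ remains in the contact set $\{u^-=u^+\}$ for all $t\in\mathbb{R}$. Given such data for $H_1$, Theorem 2 makes $u^\pm$ weak KAM for $H_2$ as well. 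The calibrated set
$$\mathcal{I}^{*e}(u^-,u^+)=\{(q,d_q u^-,[t],\partial_t u^-) : u^\pm \text{ diff. and equal at }(q,[t])\}$$
is defined purely in terms of $u^\pm$, and is invariant under the Hamilton flow of any Tonelli Hamiltonian for which $u^\pm$ are weak KAM solutions; hence $\phi^t_{H_2}$ preserves it. Thus the $H_2$-orbit through $(q_0,p_0,[t_0],\kappa_0)$ projects to a $u^-$-calibrated curve for $L_{H_2}$, which is a global $L_{H_2}$-minimizer, so $(q_0,p_0,[t_0])\in\stackrel{\scriptscriptstyle\ast}{N}_{H_2}$; the energy identity $H_2(q_0,p_0,[t_0])+\kappa_0=\alpha_{H_2}(0)$ holds by the Hamilton-Jacobi equation for $u^-$ w.r.t.\ $H_2$ at $(q_0,[t_0])$. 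By symmetry the extended Ma\~{n}\'{e} sets coincide.

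The main obstacle is this last step, specifically checking (in the time-periodic setting, where there is no energy integral) the graph-invariance of $\mathcal{I}^{*e}(u^-,u^+)$ under each of the two Hamilton flows simultaneously, and securing the Ma\~{n}\'{e}-set characterization invoked above. The commutativity $[H_1,H_2]=0$ enters only indirectly, via Theorem 2 supplying a shared pool of weak KAM solutions; once this is granted the argument is essentially an assembly of well-known weak KAM facts applied to each Hamiltonian separately.
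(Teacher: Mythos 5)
Your overall strategy matches the paper's: both proofs rest on the weak KAM characterizations of $\stackrel{\scriptscriptstyle\ast}{A}^e_{H}$ and $\stackrel{\scriptscriptstyle\ast}{N}^e_{H}$ as the intersection (resp.\ union), over conjugate pairs $(\phi_-,\phi_+)$ of weak KAM solutions, of the calibrated set $\{\phi_-=\phi_+,\ (p,\kappa)=d\phi_-=d\phi_+\}$, and the claim then reduces to showing that the relevant structure (solutions, conjugacy) agrees for $H_1$ and $H_2$. Your treatment of the Aubry set is fine: $A_{H_1}=A_{H_2}$ from Theorem 3, the fiber over an Aubry point is the graph of the (common) differential of any backward weak KAM solution, and $\mathcal{S}^{-}_{H_1}=\mathcal{S}^{-}_{H_2}$ finishes it.

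The genuine gap is in the Ma\~{n}\'{e} step, and it is not where you think it is (time-dependence per se is not the obstacle). You write that ``commutativity enters only indirectly, via Theorem~2 supplying a shared pool of weak KAM solutions,'' and you claim that $\mathcal{I}^{*e}(u^-,u^+)$ is ``invariant under the Hamilton flow of any Tonelli Hamiltonian for which $u^\pm$ are weak KAM solutions.'' That invariance claim is \emph{not} correct in the generality you state it. The standard weak KAM argument that a point $(q,p,[t],\kappa)$ with $u^-(q,[t])=u^+(q,[t])$ and $du^-=du^+=(p,\kappa)$ launches a global minimizer (and that the contact set is therefore flow-invariant) relies essentially on the inequality $u^-\geq u^+$: to show a competitor $\eta$ on $[s_1,s_2]$ has at least the same action, one uses domination on each half and then the inequality $u^-(\eta(t),[t])-u^+(\eta(t),[t])\geq 0$ at the switching point. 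But $u^-\geq u^+$ is \emph{not} a consequence of $u^-\in\mathcal{S}^-_{H_2}$, $u^+\in\mathcal{S}^+_{H_2}$ alone; it is equivalent to the pair being conjugate for $H_2$. So your argument tacitly requires $\phi_-\sim_{H_1}\phi_+\Rightarrow\phi_-\sim_{H_2}\phi_+$, which is exactly the paper's Proposition 5.2, and you never establish it. In the paper this is obtained from Proposition 5.1 ($\stackrel{\scriptscriptstyle\ast}{M}^e_{H_i}\subseteq \stackrel{\scriptscriptstyle\ast}{A}^e_{H_j}$), whose proof uses commutativity in a more serious way — via the symplectic invariance of Mather sets under $\phi^t_{\tilde H_2}$ — than merely ``Theorem 2 supplies common solutions.''

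The gap is fixable along lines consistent with your setup: a conjugate pair $(\phi_-,\phi_+)$ for $H_1$ actually agrees on the whole projected Aubry set $A_{H_1}$ (this is precisely the inclusion $A_{H}\subseteq\{\phi_-=\phi_+\}$ implicit in the characterization of $A_H$), and you already have $A_{H_1}=A_{H_2}\supseteq M_{H_2}$ from Theorem 3. Hence $\phi_-=\phi_+$ on $M_{H_2}$, i.e.\ $\phi_-\sim_{H_2}\phi_+$. With that established, the invariance of $\mathcal{I}^{*e}(u^-,u^+)$ under $\phi^t_{\tilde H_2}$ holds and your argument closes. But as written, the conjugacy transfer is a missing, nontrivial step, and the remark that commutativity enters ``only indirectly'' understates what is actually being used.
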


This article is organized as follows. In section 2, we recall some fundamental properties about weak KAM solutions and viscosity solutions (subsolutions, supersolutions, and so on), which will be useful in the sequel. Then, we will prove one Theorem in each section sequently. In section 7, we pose a conjecture, which is motivated by a previous autonomous result (Theorem 6, \cite{CL}). In section 8, we provide a remark, which illustrates some differences  between the time-periodic case and the autonomous case.

\section{Weak KAM solutions and time 1-periodic viscosity solutions}

In this section, we will recall some relations between  backward weak solutions and  time 1-periodic viscosity solutions, for the  Hamilton-Jacobi equation associated to a  time 1-periodic Tonelli Hamiltonian. We will also recall some properties of Lax-Oleinik operators.  The autonomous version of these results is well known, thanks to the weak KAM theory \cite{Fa3}. For the time-periodic case, nice expositions appeared in \cite{BR},\cite{Be4}, and we will summarize them in this section with slight modifications. To the best of the author's knowledge, it is Zhukovskaya who first realized that viscosity solutions coincide with minimax solutions (i.e., backward weak KAM solutions in Tonelli systems) for convex systems (including Tonelli ones) \cite{Zh1}, \cite{Zh2}.

Let $H$ be a time 1-periodic Tonelli Hamiltonian. Consider the Hamilton-Jacobi equation
   $$H(q,d_q\phi,[t])+d_t\phi=\alpha_H(0).$$
Then we  have, (see for example ,\cite{Be4},\cite{BR},\cite{CS},\cite{Zh1}, \cite{Zh2})
\begin{prop}
A function $\phi:M \times \mathbb{T} \rightarrow \mathbb{R}$ is a backward weak KAM solution if and only if $\phi$ is a Lipschitz viscosity solution. Moreover, for any $u \in C^0(M, \mathbb{R})$, we have
$$\phi(q,t)=\liminf_{n \rightarrow \infty}T^-_{H,s,t+n}u(q)$$
is a time 1-periodic Lipschitz viscosity solution. Also, a Lipschitz function
$\phi: M \times \mathbb{T} \rightarrow \mathbb{R}$ is viscosity solution if and only if for any $t_1 <t_2$, we have $T^-_{H, t_1,t_2}\phi(\cdot, [t_1])=\phi(\cdot, [t_2])$.
\end{prop}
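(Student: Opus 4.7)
The plan is to establish the three assertions in turn, adapting Fathi's weak KAM arguments to the time-periodic setting along the lines of \cite{BR} and \cite{Be4}. For the equivalence of backward weak KAM solutions with Lipschitz viscosity solutions, in the ($\Rightarrow$) direction I would argue: domination $\phi \prec L_H+\alpha_H(0)$ bounds finite differences of $\phi$ by action integrals along joining curves, and combined with superlinearity of $L_H$ and compactness of $M$ this yields Lipschitz regularity; testing domination against smooth upper test functions at differentiability points gives the viscosity subsolution inequality $H(q,d_q\phi,[t])+d_t\phi\leq\alpha_H(0)$, while the existence of a backward calibrated curve through each point supplies the matching supersolution inequality via an infinitesimal action argument. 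For the converse, I would use the classical comparison/dynamic programming machinery for convex, superlinear Hamilton--Jacobi equations to deduce the Lax--Oleinik identity $T^-_{H,t_1,t_2}\phi(\cdot,[t_1])=\phi(\cdot,[t_2])$ from the viscosity solution property, which immediately gives domination, and then extract a backward calibrated curve through any prescribed endpoint via an Arzel\`a--Ascoli / diagonal argument on the Tonelli minimizers realizing the infimum in $T^-$.

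For the liminf construction, the first step is to show that the family $\{T^-_{H,s,s+n}u\}_{n\in\mathbb{N}}$ is equi-Lipschitz on $M$ and uniformly bounded modulo an additive constant controlled by $\alpha_H(0)$; this follows from superlinearity of $L_H$, the definition of $\alpha_H(0)$, and compactness of $M$, by the usual a priori estimates on minimizers of the action. Hence $\phi(q,t)=\liminf_n T^-_{H,s,t+n}u(q)$ is a well-defined Lipschitz function on $M\times\mathbb{R}$. The time $1$-periodicity of $H$ gives $T^-_{H,s,t+1+n}u(q)=T^-_{H,s,t+(n+1)}u(q)$, so relabeling the sequence index shows $\phi(q,t+1)=\phi(q,t)$ and $\phi$ descends to $M\times\mathbb{T}$. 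To verify the viscosity solution property, I would either combine the third assertion with stability of the semigroup identity under pointwise liminf on an equi-Lipschitz family, or, more directly, pass to a locally uniformly convergent subsequence (which exists by equi-Lipschitzness) and invoke the classical stability of viscosity solutions under uniform convergence.

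The third assertion is essentially the dynamic programming principle for viscosity solutions of the convex, superlinear Hamilton--Jacobi equation $H(q,d_q\phi,[t])+d_t\phi=\alpha_H(0)$; both directions are standard, and given the first assertion it reduces to showing that the semigroup identity is equivalent to being simultaneously dominated and admitting a backward calibrated curve through every point. The main obstacle, and the genuinely new feature compared with the autonomous case, is the lack of convergence of $T^-_{H,s,t}u$ as $t\to\infty$ pointed out in \cite{FM}: one cannot simply take a limit, and the $\liminf$ along integer time-shifts is forced upon us. The delicate point in step two is then to exclude the possibility that the pointwise $\liminf$ strictly falls below every subsequential uniform limit on a set of positive measure; here the equi-Lipschitz bound on compact subsets of $M\times[0,1]$ together with $1$-periodicity of $H$ is what saves the argument, allowing one to identify $\phi$ with a genuine subsequential uniform limit and thus inherit the viscosity property.
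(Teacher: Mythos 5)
The paper does not actually prove Proposition 2.1 — it is stated as a summary of known results with references to \cite{Be4}, \cite{BR}, \cite{CS}, \cite{Zh1}, \cite{Zh2}, and the key mechanism for the liminf assertion appears later in the paper as a standalone computation inside the proof of Lemma~3.2. Your sketch of the first and third assertions is sound and follows the same route as the cited references. The gap is in your treatment of the second assertion, and it is a genuine one.

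You propose to handle $\phi=\liminf_n T^-_{H,s,t+n}u$ either by invoking ``stability of the semigroup identity under pointwise liminf,'' or by extracting a locally uniformly convergent subsequence and then ``identifying $\phi$ with a genuine subsequential uniform limit.'' The second route fails: the pointwise $\liminf$ of an equi-Lipschitz, equibounded sequence is the pointwise \emph{infimum over all subsequential uniform limits}, and this infimum need not be attained by a single subsequence. Precisely the non-convergence phenomenon of \cite{FM} that you mention provides the obstruction: if the orbit $T^-_{H,n}u$ accumulates on two distinct weak KAM solutions $v_1\neq v_2$, the $\liminf$ is $\min(v_1,v_2)$, which is strictly below both subsequential limits on an open set and therefore is not itself any subsequential uniform limit. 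Stability of viscosity solutions under uniform convergence thus gives you nothing about $\phi$. The first route (``stability under pointwise liminf'') is not a standard stability theorem but is essentially the statement to be proved, so as written it begs the question. What actually works is Bernard's direct argument from \cite{Be4}: show by an $\epsilon$-argument using equicontinuity that $\phi$ satisfies the fixed-point identity $T^-_{H,t_1,t_2}\phi(\cdot,[t_1])=\phi(\cdot,[t_2])$ — the inequality $T^-\phi\leq\phi$ comes directly from taking $\liminf$ in $T^-u_k\leq u_k$, and the reverse inequality requires extracting a convergent subsequence of the minimizing points $q_k$ and exploiting equicontinuity of the family $\{T^-_{H,n}u\}$, exactly as in Lemma~3.2 of this paper. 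Alternatively, one may observe that the set of subsequential uniform limits is a compact family of backward weak KAM solutions, that the pointwise infimum over such a compact family is attained at every point, and that the minimum of two (hence of any compact family of) backward weak KAM solutions is again a backward weak KAM solution (the calibrated curve of the lower one remains below the higher one by domination); but this is a different argument from the one you propose, and you would need to spell it out rather than appeal to stability under uniform convergence.
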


Let us recall some  properties of Lax-Oleinik operator \cite{BR},\cite{Be4}:

$\bullet$ Markov property, this means that
$$T^-_{H,t_1,t_2}T^-_{H,t_2,t_3}u=T^-_{H,t_1,t_3}u$$
for any $u \in C^0(M, \mathbb{R})$ and $t_3 \geq t_2 \geq t_1\in \mathbb{R}$.

$\bullet$ Contraction, i.e.,

$$\|T^{-}_{H,t_1,t_2}u_1-T^{-}_{H,t_1,t_2}u_2\|_{\infty} \leq \|u_1-u_2\|_{\infty},$$
for any two continuous functions $u_1,u_2 \in C^0(M,\mathbb{R})$ and $t_2
\geq t_1$.

$\bullet$ $T^{-}_{H,t_1,t_2}$ is  compact, order-preserving and $T^{-}_{H,t_1,t_2}(k+u)=k+T^{-}_{H,t_1,t_2}u$ for any real number $k$ and any $u \in C^0(M,\mathbb{R})$.

Certainly, there are also analogous properties for $T^+_{H,t_1,t_2}$.

\section{Proof of Theorem 1}

Let $H(q,p,[t])$ be a time 1-periodic Tonelli Hamiltonian. We regard that the Hamiltonian flow $\phi^t_H$ is defined on the extended phase space $T^*M \times \mathbb{T}$. We may also associate it to an autonomous Hamiltonian
$$\tilde H(q,p,[t],\kappa)=\kappa+H(q,p,[t]),$$
defined on symplectic manifold $(T^*(M \times \mathbb{T}), dp \wedge dq+d \kappa \wedge dt)$. The associated Hamiltonian flow is
$$\phi^s_{\tilde H}:(q,p,[t],\kappa) \rightarrow (\phi^s_H(q,p,[t]),\kappa+H(q,p,[t])-H(\phi^s_H(q,p,[t]))).$$

Clearly,  $[H_1,H_2]=0$ if and only if $\{\tilde{H_1},\tilde{H_2}\}=0.$

For simplicity of notations, we also introduce the notation $h^{[t_1,t_2]}_H$, which is defined  by
$$h^{[t_1,t_2]}_H(q_1,q_2)=\inf_{\gamma}\int^{t_2}_{t_1}(L_{H}
+\alpha_{H}(0))(\gamma(t),\dot{\gamma}(t),[t])dt,$$
where $ t_2 > t_1 \in \mathbb{R}$ and  $\gamma$ ranges over all absolutely continuous curves with $\gamma(t_1)=q_1,\gamma(t_2)=q_2$.

We only  prove the first equality, and the second equality in the theorem can be proved
analogously.

For any point $q_0 \in M $ and $t_0,t_1,t_2,t_3$ as the conditions of Theorem 1 required, we will prove that
$$T^-_{H_1,t_1,t_3}T^-_{H_2, t_0,t_1}u(q_0)=T^-_{H_2,t_2,t_3}T^-_{H_1,t_0,t_2}u(q_0).$$
By the definition,
\begin{eqnarray*}&&T^-_{H_1,t_1,t_3}T^-_{H_2,t_0,t_1}u(q_0)
\\&=&\min_{x
\in M}(T^-_{H_2,t_0,t_1}u(x)+h^{[t_1,t_3]}_{H_1}(x,q_0))\\
&=& \min_{x, y \in
M}(u(y)+h^{[t_0,t_1]}_{H_2}(y,x)
+h^{[t_1,t_3]}_{H_1}(x, q_0)).
\end{eqnarray*}
Clearly, there exist two points $x_0,y_0$ such that
$$T^-_{H_1,t_1,t_3}T^-_{H_2, t_0,t_1}u(q_0)=u(y_0)+h^{[t_0,t_1]}_{H_2}(y_0, x_0)+h^{[t_1,t_3]}_{H_1}(x_0, q_0).$$
We assume that $$\gamma_1:[t_0,t_1] \rightarrow M \text{ and }
\gamma_2:[t_1,t_3] \rightarrow M$$ are two weak minimizers that reach
$h^{[t_0,t_1]}_{H_2}(y_0,x_0),h^{[t_1,t_3]}_{H_1}(x_0,q_0)$ respectively. In other words, these two curves $\gamma_1$, $\gamma_2$ satisfy: $\gamma_1(t_0)=y_0, \gamma_1(t_1)=\gamma_2(t_1)=x_0, \gamma_2(t_3)=q_0$ and
$$\int^{t_1}_{t_0}(L_{H_2}+\alpha_{H_2}(0))(\gamma_1(t), \dot{\gamma_1}(t),[t])dt=h^{[t_0,t_1]}_{H_2}(y_0,x_0),$$
$$\int^{t_3}_{t_1}(L_{H_1}+\alpha_{H_1}(0))(\gamma_2(t), \dot{\gamma_2}(t),[t])dt=h^{[t_1,t_3]}_{H_1}(x_0,q_0).$$
Now we have
\begin{lem}
$\mathcal{L}_{H_2}(\dot{\gamma}_1(t_1))=\mathcal{L}_{H_1}(\dot{\gamma}_2(t_1))$.
\end{lem}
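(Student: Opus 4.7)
The strategy is to recognize this as a Weierstrass--Erdmann corner condition at the junction time $t_1$. By the choice of $y_0$, $x_0$, $\gamma_1$ and $\gamma_2$, the concatenation of $\gamma_1$ and $\gamma_2$ realizes the infimum of
$$\int_{t_0}^{t_1}(L_{H_2}+\alpha_{H_2}(0))(\gamma,\dot\gamma,[t])\,dt + \int_{t_1}^{t_3}(L_{H_1}+\alpha_{H_1}(0))(\gamma,\dot\gamma,[t])\,dt$$
over piecewise absolutely continuous curves $\gamma:[t_0,t_3]\to M$ with $\gamma(t_0)=y_0$ and $\gamma(t_3)=q_0$, since a smaller value at any other junction point would contradict the choice of $x_0$. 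Crucially $\gamma(t_1)$ is a free parameter at the optimum, so an arbitrary first-order perturbation of the junction must leave the total cost stationary, and this stationarity is exactly the sought equality of momenta. Note that $[H_1,H_2]=0$ is not needed for this lemma; it enters only afterward when one commutes the two pieces via the Hamiltonian flows.

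To carry this out, fix $v\in T_{x_0}M$ and construct, for $|s|$ small, smooth families $\gamma_1^s:[t_0,t_1]\to M$ and $\gamma_2^s:[t_1,t_3]\to M$ with $\gamma_1^0=\gamma_1$, $\gamma_2^0=\gamma_2$, fixed outer endpoints $\gamma_1^s(t_0)=y_0$ and $\gamma_2^s(t_3)=q_0$, and common junction $\gamma_1^s(t_1)=\gamma_2^s(t_1)=\exp_{x_0}(sv)$. Let $\mathcal{A}(s)$ denote the sum of the two action integrals. Optimality gives $\mathcal{A}(s)\ge\mathcal{A}(0)$, hence $\mathcal{A}'(0)=0$. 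Since $\gamma_1$ and $\gamma_2$ are Tonelli extremals on their respective intervals, they are $C^2$ and satisfy the Euler--Lagrange equation for $L_{H_2}$ and $L_{H_1}$, so on each half the usual integration by parts annihilates the interior term and only the boundary term at $t_1$ survives. This reduces the stationarity to
$$\Bigl\langle \tfrac{\partial L_{H_2}}{\partial \dot q}(\gamma_1(t_1),\dot\gamma_1(t_1),[t_1]) - \tfrac{\partial L_{H_1}}{\partial \dot q}(\gamma_2(t_1),\dot\gamma_2(t_1),[t_1]),\, v\Bigr\rangle = 0,$$
and letting $v$ range over $T_{x_0}M$, combined with $\partial L_H/\partial\dot q=\mathcal{L}_H$, yields the claim.

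The only mildly technical point is producing the variation $(\gamma_1^s,\gamma_2^s)$ with the prescribed behavior at $t_1$. A concrete choice is, in a local chart around $x_0$, to put $\gamma_1^s(t)=\gamma_1(t)+\chi_-(t)\,sv$ on $[t_1-\delta,t_1]$, with $\chi_-$ a smooth cutoff equal to $0$ at $t_1-\delta$ and $1$ at $t_1$, and to leave $\gamma_1$ unchanged before $t_1-\delta$; analogously, with a mirrored cutoff $\chi_+$, for $\gamma_2$ on $[t_1,t_1+\delta]$. Smooth dependence of $\mathcal{A}$ on $s$ is then immediate, the $O(\delta)$ interior contribution is killed by the Euler--Lagrange identity, and the computation of $\mathcal{A}'(0)$ is routine. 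This is really the only obstacle, and it is standard; the substance of the argument is the first-variation principle for free-endpoint variational problems.
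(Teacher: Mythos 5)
Your proof is correct and is exactly the ``standard variational discussion'' the paper alludes to (and outsources to the autonomous argument in \cite{CL}): the choice of $x_0$ makes the junction point a free interior variable, so the Weierstrass--Erdmann corner condition forces the two momenta $\mathcal{L}_{H_2}(\dot\gamma_1(t_1))$ and $\mathcal{L}_{H_1}(\dot\gamma_2(t_1))$ to agree. You also correctly note that $[H_1,H_2]=0$ plays no role here, and the cutoff-based first-variation computation (with the interior term killed by the Euler--Lagrange equations for $L_{H_2}$ and $L_{H_1}$ separately, valid since Tonelli minimizers are $C^2$) is the right way to make the argument rigorous.
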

The proof of this lemma is just a standard variational discussion. In autonomous case, there is a proof in \cite{CL}. For time-periodic case, only some trivial modifications are needed. So, the proof is omitted.

We denote
$$\mathcal{L}_{H_2}(\dot{\gamma}_1(t_1))=\mathcal{L}_{H_1}(\dot{\gamma}_2(t_1))$$
by $p^{\diamond}$. Hence, if we assume that
$\mathcal{L}_{H_1}(\dot{\gamma}_2(t_3))=p_0$, then $$(y_0,
\mathcal{L}_{H_2}(\dot{\gamma}_1(t_0)),[t_0])= \phi^{t_0-t_1}_{H_2}
\phi^{t_1-t_3}_{H_1}(q_0,p_0,[t_3]).$$
Now we introduce an auxiliary variable $\kappa_0$ and regard $(q_0,p_0,[t_3],\kappa_0)$ as a point in $T^*(M \times \mathbb{T})$.
 Thus,
\begin{eqnarray*}
&&T^-_{H_1,t_1,t_3}T^-_{H_2,t_0,t_1}u(q_0)\\
&=&u(y_0)+\int(pdq-H_1dt)\big{(}\phi^t_{H_1}(x_0,p^{\diamond},[t_1])|_{[0,t_3-t_1]}\big{)}\\
&+&\int(pdq-H_2dt)\big{(}\phi^t_{H_2}(x_0,p^{\diamond},[t_1])|_{[t_0-t_1,0]}\big{)}\\
&+&(t_3-t_1)\alpha_{H_1}(0)+(t_1-t_0)\alpha_{H_2}(0)\\
&=&u(y_0)+\int(pdq+\kappa dt)\big{(}\phi^t_{\tilde {H_1}}(x_0,p^{\diamond},[t_1],\kappa^{\diamond})|_{[0,t_3-t_1]}\big{)}\\
&+&\int(pdq+\kappa dt)\big{(}\phi^t_{\tilde{ H_2}}(x_0,p^{\diamond},[t_1],\kappa^{\diamond})|_{[t_0-t_1,0]}\big{)}\\
&+&(t_3-t_1)\alpha_{H_1}(0)+(t_1-t_0)\alpha_{H_2}(0)\\
&-&(t_3-t_1)\tilde{H_1}(x_0,p^{\diamond},[t_1],\kappa^{\diamond})-
(t_1-t_0)\tilde{H_2}(x_0, p^{\diamond}, [t_1], \kappa^{\diamond}),
\end{eqnarray*}
here, and in the following, $pdq-Hdt$ is regarded as a smooth 1-form on $T^*M\times \mathbb{T}$ and  $\kappa_0$ and $\kappa^{\diamond}$ are related by $$(q_0,p_0,[t_3],\kappa_0)=\phi^{t_3-t_1}_{\tilde{H_1}}(x_0, p^{\diamond},[t_1],\kappa^{\diamond}).$$

Assume that $$\phi^{t_1-t_0}_{\tilde{H_2}}(y_0, \mathcal{L}_{H_2}(\dot{\gamma_1}(t_0)),[t_0],\kappa^*)=
(x_0,p^{\diamond},[t_1],\kappa^{\diamond}).$$

 Let
$\gamma_4:[t_2, t_3] \rightarrow M$ be the curve such that $$\pi:
\phi^t_{H_2}(q_0,p_0,[t_3])|_{[t_2-t_3,0]}=\gamma_4.$$
Similarly, let $\gamma_3:[t_0, t_2] \rightarrow M$ be the curve such that
$$\pi:
\phi^t_{H_1}(\gamma_4(t_2),\mathcal{L}_{H_2}
(\dot{\gamma_4}(t_2)),[t_2])|_{[t_0-t_2,0]}=\gamma_3.$$
Since $[H_1,H_2]=0$, then $\{\tilde{H_1},\tilde{H_2}\}=0$, and so we have
$$\phi^{t_0-t_2}_{\tilde{H_1}}\phi^{t_2-t_3}_{\tilde{H_2}} (q_0,p_0,[t_3],\kappa_0)=(y_0,\mathcal{L}_{H_2}(\dot{\gamma_1}(t_0)),[t_0],\kappa^{*}).$$

Hence,
\begin{eqnarray*}
&&T^-_{H_2,t_2,t_3}T^-_{H_1,t_0,t_2}u(q_0)\\
 &\leq& u(\pi \circ  \phi^{t_0-t_2}_{H_1}\phi^{t_2-t_3}_{H_2} (q_0,p_0,[t_3]))\\
&+& \int^{t_2}_{t_0}
(L_{H_1}+\alpha_{H_1}(0))(\gamma_3(t),\dot{\gamma_3}(t),[t])dt
\\&+& \int^{t_3}_{t_2}
(L_{H_2}+\alpha_{H_2}(0))(\gamma_4(t),\dot{\gamma_4}(t),[t])dt\\
&=&u(y_0)+\int(pdq+\kappa dt)\big{(}\phi^t_{\tilde{H_1}}(y_0,\mathcal{L}_{ H_2}(\dot{\gamma_1}(t_0)),[t_0],\kappa^{*})|_{[0,t_2-t_0]}\big{)}\\
&+&\int(pdq+\kappa dt)(\phi^t_{\tilde{H_2}}(q_0,p_0,[t_3],\kappa_0)|_{[t_2-t_3,0]})\\
&-&(t_2-t_0)\tilde{H_1}(y_0,\mathcal{L}_{H_2}(\dot{\gamma_1}(t_0)),[t_0],\kappa^{*})
-(t_3-t_2)\tilde{H_2}(q_0,p_0,[t_3],\kappa_0)\\
&+&(t_2-t_0)\alpha_{H_1}(0)+(t_3-t_2)\alpha_{H_2}(0)\\
&=&u(y_0)+\int(pdq+\kappa dt)\big{(}\phi^t_{\tilde {H_1}}(x_0,p^{\diamond},[t_1],\kappa^{\diamond})|_{[0,t_3-t_1]}\big{)}\\
&+&\int(pdq+\kappa dt)\big{(}\phi^t_{\tilde{H_2}}(y_0,\mathcal{L}_{H_2}
(\dot{\gamma_1}(t_0)),[t_0],\kappa^{*})|_{[0,t_1-t_0]} \big{)}\\
&+&(t_1-t_0)\alpha_{H_2}(0)+(t_3-t_1)\alpha_{H_1}(0)\\
&-&(t_3-t_1)\tilde{H_1}(x_0,p^{\diamond},[t_1],\kappa^{\diamond})-
(t_1-t_0)\tilde{H_2}(x_0, p^{\diamond}, [t_1], \kappa^{\diamond})\\
&=& T^-_{H_1,t_1,t_3}T^-_{H_2,t_0,t_1}u(q_0),
\end{eqnarray*}
where the first inequality follows from the definition of
Lax-Oleinik operators; the first equality follows from  direct
calculation; the second equality follows from Stokes' formula and the facts that $\{\tilde{H_1}, \tilde{H_2}\}=0$ and that both $\tilde{H_1}$ and $\tilde{H_2}$ are constants on all of these trajectories. More precisely, Stokes' formula is applied as follows:
\begin{eqnarray*}
&& \int (pdq+\kappa dt)(\phi^t_{\tilde{H_1}}(y_0, \mathcal{L}_{H_2}(\dot{\gamma_1}(t_0)),[t_0],\kappa^*)|_{[0,t_2-t_0]})\\
&+&\int (pdq+\kappa dt)(\phi^t_{\tilde{H_2}}(q_0, p_0,[t_3],\kappa_0)|_{[t_2-t_3,0]})\\
&-&\int (pdq+\kappa dt)(\phi^t_{\tilde{H_1}}(x_0, p^{\diamond},[t_1],\kappa^{\diamond})|_{[0,t_3-t_1]})\\
&-&\int (pdq+\kappa dt)(\phi^t_{\tilde{H_2}}(y_0, \mathcal{L}_{H_2}(\dot{\gamma_1}(t_0)),[t_0],\kappa^*)|_{[0,t_1-t_0]})\\
&=& \int_{\phi^t_{\tilde{H_1}}|_{[t_0-t_2,0]}
(\phi^t_{\tilde{H_2}}(q_0,p_0,[t_3],\kappa_0)|_{[t_2-t_3,0]})} dp\wedge dq+d \kappa \wedge dt\\
&=&0,
\end{eqnarray*}
here the first equality is a direct application of Stokes' formula, the second equality holds by the following reason. The tangent space to the closed region
$$\phi^t_{\tilde{H_1}}|_{[t_0-t_2]}
(\phi^t_{\tilde{H_2}}(q_0,p_0,[t_3],\kappa_0)|_{[t_2-t_3,0]})$$
(which is a $C^1$ manifold with piecewise-$C^1$ boundary or a piecewise $C^1$ curve, depending on whether $X_{\tilde{H_1}}$ and $X_{\tilde{H_2}}$ are independent somewhere, by smooth dependence of ODE on initial conditions, see for example \cite{Har}) is spanned by $X_{\tilde{H_1}}$ and $X_{\tilde{H_2}}$ (here, $X_{\tilde{H_i}}$ denotes the Hamiltonian vector field of $\tilde{H_i}$ with respect to symplectic structure $dp \wedge dq+d \kappa \wedge dt,i=1,2$). Since $\{\tilde{H_1},\tilde{H_2}\}=0$,
$$(dp \wedge dq+d \kappa \wedge dt)(X_{\tilde{H_1}},X_{\tilde{H_2}}) \equiv 0.$$

The opposite inequality can be proved similarly. So far, the proof of Theorem 1 is completed. \qed

Let $T^{\mp}_{H,1}=T^{\mp}_{H,0,1}$ and
$$T^{\mp}_{H,n}=\underbrace{T^{\mp}_{H,1} \circ \cdots \circ T^{\mp}_{H,1}}_n.$$
It is known that $\{T^{-}_{H,n}\}$ (resp. $\{T^{+}_{H,n}\}$)   composes a discrete semi-group, by the Markov property of Lax-Oleinik operators. Then, a particular form of Theorem 1 is :

\begin{prop}
Let $u \in C^{0}(M,  \mathbb{R})$, then
$$T^-_{H_1,n}T^-_{H_2,k}u=T^-_{H_2,
k}T^-_{H_1,n}u, \,\,\,\,\,\,\,\,
T^+_{H_1,n}T^+_{H_2,k}u=T^+_{H_2,
k}T^+_{H_1,n}u,$$
for any $n, k \in \mathbb{Z}^+$, here $\mathbb{Z}^+$ denotes the set of nonnegative integers.
\end{prop}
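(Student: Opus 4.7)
The plan is to derive Proposition 3.1 as a direct corollary of Theorem 1 once one translates the iterated-semigroup notation $T^{\mp}_{H,n}$ into Lax--Oleinik operators over explicit time intervals. The two ingredients needed are time-1 periodicity of $H$ and the Markov property recalled in Section 2. From periodicity, the substitution $t \mapsto t+m$ with $m \in \mathbb{Z}$ leaves $[t]$ (and hence $L_H(\cdot,\cdot,[t])$) unchanged, so from the defining variational formula one reads off
$$T^-_{H,a,b} = T^-_{H,a+m,b+m}\qquad \text{for every } a \leq b \text{ and } m \in \mathbb{Z}.$$
Combining this with the Markov property collapses
$$(T^-_{H,1})^n = T^-_{H,0,1}\circ T^-_{H,1,2}\circ\cdots\circ T^-_{H,n-1,n} = T^-_{H,0,n},$$
and more generally $T^-_{H,n} = T^-_{H,s,s+n}$ for any $s \in \mathbb{Z}$. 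The same identities hold for $T^+$.

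With this notational dictionary in place, I would rewrite
$$T^-_{H_1,n}\,T^-_{H_2,k}\,u = T^-_{H_1,k,k+n}\,T^-_{H_2,0,k}\,u,$$
having used the integer shift to put the inner operator on $[0,k]$ and the outer one on $[k,k+n]$. Now apply Theorem~1 with the choice
$$t_0 = 0,\quad t_1 = k,\quad t_2 = n,\quad t_3 = k+n.$$
The required hypotheses $t_0 \leq t_1 \leq t_3$, $t_0 \leq t_2 \leq t_3$, and $t_3-t_1 = n = t_2-t_0$ are all immediate. Theorem~1 yields
$$T^-_{H_1,k,k+n}\,T^-_{H_2,0,k}\,u = T^-_{H_2,n,k+n}\,T^-_{H_1,0,n}\,u,$$
and one final integer shift identifies the right-hand side with $T^-_{H_2,k}\,T^-_{H_1,n}\,u$. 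The $T^+$ identity is obtained in exactly the same way from the second equality of Theorem~1.

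There is no substantive obstacle: once Theorem~1 is available, the proposition is a bookkeeping corollary. The only point that deserves a line of verification is the translation invariance $T^{\mp}_{H,a,b} = T^{\mp}_{H,a+m,b+m}$ for $m \in \mathbb{Z}$, and this is immediate from the fact that $L_H$ enters the Lax--Oleinik formula only through $[t] = t \bmod 1$, so the integer shift leaves the integrand pointwise unchanged. Everything else is a direct specialization of Theorem~1 and the Markov property.
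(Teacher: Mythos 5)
Your proof is correct and matches the paper's intended approach: the paper simply asserts Proposition 3.1 as ``a particular form of Theorem 1'' without writing out a derivation, and your argument supplies exactly the expected bookkeeping, namely integer-shift invariance $T^\mp_{H,a,b}=T^\mp_{H,a+m,b+m}$ from time-$1$ periodicity of $L_H$, the Markov property to identify $T^\mp_{H,n}$ with $T^\mp_{H,0,n}$, and then direct substitution of $t_0=0,\,t_1=k,\,t_2=n,\,t_3=k+n$ into Theorem 1.
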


By the result of Bernard \cite{Be4}, we know that for any continuous function $u$, if $$u^*=\liminf_{n \rightarrow \infty}T^-_{H,n}u,$$
then $u^*$ is a fixed point of $T^-_{H,1}$. Denote the set of fixed points of $T^{+}_{H,1}$ (resp.$T^{-}_{H,1}$)  by $\mathcal{S}^{+}_{H,0}$ (resp. $\mathcal{S}^{-}_{H,0}$). It is known \cite{Be4} that, both two sets $\mathcal{S}^{+}_{H,0}$ and $\mathcal{S}^{-}_{H,0}$ are nonempty. Moreover, we also know that the time 1-periodic function
$$\phi^*(q,t)=T^-_{H,0,t}u^*(q), \,\,\,\,\,\, t \in \mathbb{R}$$
is a weak KAM solution (or, equivalently, time 1- periodic viscosity solution) of
$$H(q, d_q\phi,[t])+d_t \phi=\alpha_H(0),$$
for any $u^* \in \mathcal{S}^{-}_{H,0}$.
Now we claim that

\begin{prop}
$\mathcal{S}^{-}_{H_1,0}\cap \mathcal{S}^{-}_{H_2,0} \neq \emptyset, \mathcal{S}^{+}_{H_1,0}\cap \mathcal{S}^{+}_{H_2,0} \neq \emptyset.$
\end{prop}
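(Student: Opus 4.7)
The plan is to combine the commutation of the Lax-Oleinik semigroups (the preceding Proposition) with Bernard's $\liminf$ construction of fixed points. I would pick any $u\in C^0(M,\mathbb{R})$ and set
$$u_1 := \liminf_{n\to\infty} T^-_{H_1,n}u,$$
so that $u_1\in \mathcal{S}^-_{H_1,0}$ by the cited result of Bernard.

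The commutation of $T^-_{H_1,1}$ and $T^-_{H_2,k}$ then gives, for every $k\ge 0$,
$$T^-_{H_1,1}\bigl(T^-_{H_2,k}u_1\bigr) = T^-_{H_2,k}\bigl(T^-_{H_1,1}u_1\bigr) = T^-_{H_2,k}u_1,$$
so the whole orbit $\{T^-_{H_2,k}u_1\}_{k\ge 0}$ already lies in $\mathcal{S}^-_{H_1,0}$. Standard Lax-Oleinik estimates make this orbit equi-Lipschitz and uniformly bounded. I would then define
$$u^* := \liminf_{k\to\infty} T^-_{H_2,k}u_1;$$
Bernard's theorem applied to $H_2$ yields $u^*\in \mathcal{S}^-_{H_2,0}$.

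The remaining step, and the genuine content of the argument, is to verify that $u^*\in \mathcal{S}^-_{H_1,0}$ also. Writing $v_k := T^-_{H_2,k}u_1$, this reduces to the claim that $T^-_{H_1,1}$ commutes with the pointwise $\liminf$ on the equi-Lipschitz, $T^-_{H_1,1}$-invariant family $\{v_k\}$. I would prove this directly from the representation $T^-_{H_1,1}f(q) = \min_{x\in M}\bigl[f(x)+h^{[0,1]}_{H_1}(x,q)\bigr]$, whose infimum is attained: for one inequality, choose a subsequence of $v_k$ realizing $\liminf_k v_k(x_0) = u^*(x_0)$ at the optimal $x_0$ for $u^*$; for the reverse inequality, extract optimal $x_j$ for each $T^-_{H_1,1}v_{k_j}(q)$, use compactness of $M$ to pass to $x_j\to x_\infty$, and combine the equi-Lipschitz estimate with $\liminf_k v_k(x_\infty)\le \lim_j v_{k_j}(x_\infty)$. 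Putting these together, $T^-_{H_1,1}u^* = \liminf_k T^-_{H_1,1}v_k = \liminf_k v_k = u^*$, as required.

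The case $\mathcal{S}^+_{H_1,0}\cap \mathcal{S}^+_{H_2,0}\ne \emptyset$ is treated identically, using the $T^+$-operators and the $\limsup$ analogue of Bernard's fixed-point result. The principal obstacle throughout is the interchange of $T^-_{H_1,1}$ with the pointwise $\liminf$; the argument is not deep, but it does rely essentially on the combination of equi-Lipschitz regularity of the fixed-point family with compactness of the base $M$.
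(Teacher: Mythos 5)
Your proposal is correct and follows essentially the same route as the paper: pick a $T^-_{H_1,1}$-fixed point, use the commutation (Proposition 3.1) to see that its $T^-_{H_2,k}$-orbit stays inside $\mathcal{S}^-_{H_1,0}$, form $u^*=\liminf_k T^-_{H_2,k}u$ which lies in $\mathcal{S}^-_{H_2,0}$ by Bernard's result, and then show $u^*\in\mathcal{S}^-_{H_1,0}$ by an equi-continuity/compactness argument that lets $T^-_{H_1,1}$ pass through the pointwise $\liminf$. The paper's Lemma 3.2 carries out precisely the two-inequality check you describe (the easy domination inequality by taking $\liminf$ termwise, and the reverse inequality by extracting minimizing points $q_j\to q^*$ and invoking equicontinuity), so the two arguments match.
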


\begin{proof}
We only prove the first relation.

Choose $u \in \mathcal{S}^-_{H_1,0}$. By Proposition 3.1, we know that
$$T^-_{H_1,n}T^-_{H_2,k}u=T^-_{H_2,
k}T^-_{H_1,n}u=T^-_{H_2,k}u$$
for any $n, k \in \mathbb{Z}^+$. So, $T^-_{H_2,k}u \in \mathcal{S}^-_{H_1,0}$ for any $k \in \mathbb{Z}^+$. Now let
$$u^*=\liminf_{k \rightarrow \infty}T^-_{H_2,k}u,$$
Then, clearly we have

$$u^* \in \mathcal{S}^-_{H_2,0}.$$

In fact, we also have
\begin{lem}
$u^* \in \mathcal{S}^-_{H_1,0}.$
 \end{lem}

\begin{proof}
(of the Lemma.)  By the definition of $\mathcal{S}^{-}_{H_1,0}$, we only need to show that for any point $q \in M$,  $$u^*(q)=T^{-}_{H_1,1}u^*(q)=\min_{q_1\in M}(u^*(q_1)+h^{[0,1]}_{H_1}(q_1,q)).$$

For simplicity of notations, we let  $u_k=T^-_{H_2,k}u$. Note that $u_k\in \mathcal{S}^-_{H_1,0}$ for each $k \in \mathbb{Z}^+$. Also, recall that the family of $u_k$ is equi-continuous \cite{Be4}.

For every $k \in \mathbb{Z}^+$ and any two  points $q,q_1 \in M$, we have $$u_k(q) \leq u_k(q_1)+h^{[0,1]}_{H_1}(q_1,q),$$
since $u_k \in \mathcal{S}^{-}_{H_1,0}$.
Then, $u^{*}(q) \leq u^{*}(q_1)+h^{[0,1]}_{H_1}(q_1,q)$. Thus, we obtain
$$u^{*}(q) \leq \inf_{q_1 \in M}(u^{*}(q_1)+h^{[0,1]}_{H_1}(q_1,q))=T^-_{H_1,1}u^{*}(q).$$

On the other hand, let us fix arbitrarily an $\epsilon >0$ and choose a subsequence  $u_{k_j} (j \geq 3)$ such that $u^{*}(q) \geq u_{k_j}(q)-\epsilon$. For each $3 \leq j \in \mathbb{Z}^+$, choose a point $q_j \in M$ such that
$$u_{k_j}(q)=u_{k_j}(q_j)+h^{[0,1]}_{H_1}(q_j,q).$$
Since $M$ is compact, we may assume that $q_j \rightarrow q^*$ by taking a subsequence if necessary.  Then, there exists a $j_0$ such that $u_{k_j}(q^*) \geq u^{*}(q^*)-\epsilon$, when $j \geq j_0$. Since the family $u_k$ is equi-continuous, there exists $j_1 \geq j_0$ such that for $j \geq j_1$,
$$u_{k_j}(q_j) \geq u^{*}(q_j)-2\epsilon.$$

Hence, for $j \geq j_1$,  we have
$$u^{*}(q) \geq u_{k_j}(q)-\epsilon = u_{k_j}(q_j)+h^{[0,1]}_{H_1}(q_j,q)-\epsilon \geq u^{*}(q_j)+h^{[0,1]}_{H_1}(q_j,q)-3\epsilon.$$
As a consequence, we have $u^{*}(q) \geq T^-_{H_1,1}u^{*}(q)-3\epsilon$.
Since this inequality holds for any $\epsilon >0$ , we obtain
$$u^{*}(q) \geq T^-_{H_1,1}u^{*}(q).$$

Thus, Lemma 3.2 follows.

\end{proof}
 Now the proof of Proposition 3.2 is completed.
\end{proof}

Furthermore, we also have
\begin{prop}
$\mathcal{S}^{-}_{H_1}\cap \mathcal{S}^{-}_{H_2} \neq \emptyset, \mathcal{S}^{+}_{H_1}\cap \mathcal{S}^{+}_{H_2} \neq \emptyset.$
\end{prop}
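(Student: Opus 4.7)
The plan is to produce an element of $\mathcal{S}^{-}_{H_1}\cap \mathcal{S}^{-}_{H_2}$ by lifting a common integer-time fixed point, provided by Proposition 3.2, to an honest weak KAM solution on $M\times \mathbb{T}$ via a Lax--Oleinik evolution. Choose $u^{*}\in \mathcal{S}^{-}_{H_1,0}\cap \mathcal{S}^{-}_{H_2,0}$ (Proposition 3.2) and define $\phi(q,t)=T^{-}_{H_1,0,t}u^{*}(q)$. The Bernard-type result quoted just before Proposition 3.2 gives that $\phi$ is time-$1$ periodic in $t$ and lies in $\mathcal{S}^{-}_{H_1}$, so the real work is to show $\phi\in \mathcal{S}^{-}_{H_2}$. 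By Proposition 2.1 this is equivalent to verifying $T^{-}_{H_2,t_1,t_2}\phi(\cdot,[t_1])=\phi(\cdot,[t_2])$ for every $t_1\le t_2$.

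The next step is to expand the left-hand side using Theorem 1 with parameters $(s_0,s_1,s_2,s_3)=(0,t_1,t_2-t_1,t_2)$, which gives
$$T^{-}_{H_2,t_1,t_2}T^{-}_{H_1,0,t_1}u^{*}=T^{-}_{H_1,t_2-t_1,t_2}T^{-}_{H_2,0,t_2-t_1}u^{*},$$
while the Markov property rewrites the target as $\phi(\cdot,[t_2])=T^{-}_{H_1,t_2-t_1,t_2}T^{-}_{H_1,0,t_2-t_1}u^{*}$. Setting $s=t_2-t_1$ and stripping off the common outer operator $T^{-}_{H_1,t_2-t_1,t_2}$, the required identity reduces to the continuous-time coincidence $T^{-}_{H_1,0,s}u^{*}=T^{-}_{H_2,0,s}u^{*}$ for every $s\ge 0$. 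A preliminary calculation using Theorem 1 with $(0,s,n,n+s)$ together with $u^{*}\in \mathcal{S}^{-}_{H_i,0}$ and $1$-periodicity of $L_{H_j}$ shows that both sides are common fixed points of $T^{-}_{H_1,s,s+1}$ and $T^{-}_{H_2,s,s+1}$ and agree at $s=0$; the question is whether they coincide for every $s$.

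To secure this continuous-time coincidence I would refine $u^{*}$ via a double Bernard liminf, taking $u^{*}=\liminf_{n,k\to \infty}T^{-}_{H_1,0,n}T^{-}_{H_2,0,k}u$ for some $u\in C^{0}(M,\mathbb{R})$. The commutativity of the integer-time operators (Proposition 3.1), together with the intertwining $T^{-}_{H_1,0,s}T^{-}_{H_2,0,k}=T^{-}_{H_2,s,s+k}T^{-}_{H_1,0,s}$ (Theorem 1 plus $1$-periodicity), allow both sides of the desired identity to be rewritten as Bernard-type liminfs of $T^{-}_{H_2,s,s+k}$-orbits of compatible initial data; the two limits can then be matched by an interchange of liminfs, justified by the equi-Lipschitz continuity of the Lax--Oleinik operators cited in the proof of Lemma 3.2, and by the observation that both land in the common fixed-point set of $T^{-}_{H_j,s,s+1}$ at phase $s$.

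The principal obstacle is exactly this last matching step. In the autonomous setting of \cite{CL} the energy integral supplies a built-in compatibility between the $H_1$ and $H_2$ flows which is absent in the time-periodic case, so the upgrade from the discrete-time coincidence of Proposition 3.2 to the required continuous-time coincidence must be extracted purely from Theorem 1, the Markov property, and Bernard's liminf formalism. Once this equality is established, the lift $\phi=T^{-}_{H_1,0,t}u^{*}$ lies in $\mathcal{S}^{-}_{H_1}\cap \mathcal{S}^{-}_{H_2}$; the forward assertion $\mathcal{S}^{+}_{H_1}\cap \mathcal{S}^{+}_{H_2}\neq \emptyset$ is obtained by the symmetric construction with $T^{+}$ replacing $T^{-}$.
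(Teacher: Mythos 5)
Your setup coincides with the paper's: take a common integer-time fixed point $u^{*}\in \mathcal{S}^{-}_{H_1,0}\cap \mathcal{S}^{-}_{H_2,0}$ from Proposition 3.2, lift it to $\phi(q,[t])=T^{-}_{H_1,0,t}u^{*}(q)$, and observe that everything reduces to the continuous-time coincidence $T^{-}_{H_1,0,s}u^{*}=T^{-}_{H_2,0,s}u^{*}$ for every $s\ge 0$ (by $1$-periodicity, $s\in(0,1]$ suffices). One small caution: ``stripping off the common outer operator $T^{-}_{H_1,t_2-t_1,t_2}$'' is not an equivalence, since Lax--Oleinik operators are not injective; but you only need the sufficient direction (prove the inner functions agree, then apply the outer operator), so the reduction is sound.

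The genuine gap is that you never establish this continuous-time coincidence. You correctly note that both $T^{-}_{H_1,0,s}u^{*}$ and $T^{-}_{H_2,0,s}u^{*}$ are fixed points of $T^{-}_{H_i,s,s+1}$ and agree at $s=0$, but these fixed-point sets are not singletons, so that observation proves nothing, and you explicitly concede that the ``double liminf'' refinement and the liminf interchange are a plan rather than an argument. The paper closes exactly this step by a finite contradiction argument requiring no further limit procedure: if the two evolutions disagree, then by continuity in $s$ they disagree at some rational $t_0\in(0,1)$, say $T^{-}_{H_1,0,t_0}u^{*}(q_0)<T^{-}_{H_2,0,t_0}u^{*}(q_0)$; let $k_0$ be the least positive integer with $k_0t_0\in\mathbb{Z}$. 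Writing $T^{-}_{H_i,0,2t_0}u^{*}=T^{-}_{H_i,t_0,2t_0}T^{-}_{H_i,0,t_0}u^{*}$ by the Markov property, inserting the middle term $T^{-}_{H_1,t_0,2t_0}T^{-}_{H_2,0,t_0}u^{*}=T^{-}_{H_2,t_0,2t_0}T^{-}_{H_1,0,t_0}u^{*}$ from Theorem 1, and using order-preservation, the strict inequality at $q_0$ is propagated to $T^{-}_{H_1,0,2t_0}u^{*}(q_0)<T^{-}_{H_2,0,2t_0}u^{*}(q_0)$; iterating $k_0-1$ times and invoking the integer-time fixed-point identity $T^{-}_{H_i,0,k_0t_0}u^{*}=u^{*}$ yields $u^{*}(q_0)<u^{*}(q_0)$, a contradiction. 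This rational-time induction anchored on the integer-time fixed point is precisely the mechanism missing from your proposal; without it you have only reduced the Proposition to an unproven identity.
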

\begin{proof}
We also only prove the first relation.

Choose $u \in \mathcal{S}^{-}_{H_1,0}\cap \mathcal{S}^{-}_{H_2,0}$,  we will prove that $T^-_{H_1,0,t}u=T^-_{H_2,0,t}u$ for any $t \in (0,1]$.

For $t=1$, it follows directly from the definitions of $\mathcal{S}^{-}_{H_1,0}$ and  of $\mathcal{S}^{-}_{H_2,0}$.

Now we consider the case $t \in (0,1)$. Otherwise, there exists a point $q_0 \in M$ and a rational number $t_0 \in (0,1)$ such that
$$T^-_{H_1,0,t_0}u(q_0)\neq T^-_{H_2,0,t_0}u(q_0).$$

Without loss of generality, we may assume that $$T^-_{H_1,0,t_0}u(q_0) < T^-_{H_2,0,t_0}u(q_0).$$

Assume that $k_0$ is the smallest positive integer such that $k_0t_0 \in \mathbb{Z}^+$. Then,

\begin{eqnarray*}&&T^-_{H_1,0,2t_0}u(q_0)
\\&=&T^-_{H_1,t_0,2t_0}T^-_{H_1,0,t_0}u(q_0)\\
&<&T^-_{H_1,t_0,2t_0}T^-_{H_2,0,t_0}u(q_0)\\
&=&T^-_{H_2,t_0,2t_0}T^-_{H_1,0,t_0}u(q_0)\\
&<&T^-_{H_2,t_0,2t_0}T^-_{H_2,0,t_0}u(q_0)\\
&=&T^-_{H_2,0,2t_0}u(q_0),
\end{eqnarray*}
here, the first equality and the third equality follow from Markov property of Lax-Oleinik operators; the second equality follows from commuting property of Lax-Oleinik operators (Theorem 1); the two inequalities follow from the order-preserving property of Lax-Oleinik operators.

By induction to the $(k_0-1)$th step of this procedure, we obtain

$$
u(q_0)=T^-_{H_1,0,k_0t_0}u(q_0)
<T^-_{H_2,0,k_0t_0}u(q_0)=u(q_0).
$$
This contradiction proves Proposition 3.3.
\end{proof}

\begin{prop}
$\tilde{H_2}|_{\stackrel{\scriptscriptstyle
\ast}{A}^e_{H_1}}=\alpha_{H_2}(0); \tilde{H_1}|_{\stackrel{\scriptscriptstyle
\ast}{A}^e_{H_2}}=\alpha_{H_1}(0)$.
\end{prop}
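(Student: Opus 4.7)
The plan is to use the existence of a common weak KAM solution to both Hamiltonians, and then invoke the graph property of the Aubry set together with the fact that a viscosity solution satisfies its Hamilton--Jacobi equation at every point of differentiability. By the symmetry between $H_1$ and $H_2$, it suffices to prove the first identity.

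First, I would apply Proposition 3.3 to pick $u\in\mathcal{S}^{-}_{H_1}\cap\mathcal{S}^{-}_{H_2}$. By Proposition 2.1, $u$ is simultaneously a Lipschitz viscosity solution of $H_1(q,d_qu,[t])+d_tu=\alpha_{H_1}(0)$ and of $H_2(q,d_qu,[t])+d_tu=\alpha_{H_2}(0)$ on $M\times\mathbb{T}$. Next, I would fix an arbitrary point $(q_0,p_0,[t_0],\kappa_0)\in\stackrel{\scriptscriptstyle\ast}{A}^e_{H_1}$. Then $(q_0,[t_0])$ belongs to the projected Aubry set $A_{H_1}$, and the classical weak KAM results of Fathi give that the backward weak KAM solution $u$ is differentiable at $(q_0,[t_0])$, and that the graph property $(q_0,p_0,[t_0])=(q_0,d_qu(q_0,[t_0]),[t_0])$ holds on $\stackrel{\scriptscriptstyle\ast}{A}_{H_1}$. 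From the defining condition $H_1(q_0,p_0,[t_0])+\kappa_0=\alpha_{H_1}(0)$ and the identity $H_1(q_0,d_qu,[t_0])+d_tu(q_0,[t_0])=\alpha_{H_1}(0)$, one also reads off $\kappa_0=d_tu(q_0,[t_0])$.

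Since $u$ is a viscosity solution of the $H_2$ Hamilton--Jacobi equation, at every point of differentiability of $u$ we have $H_2(q,d_qu,[t])+d_tu=\alpha_{H_2}(0)$; in particular at $(q_0,[t_0])$. Combining with the two previous identifications $p_0=d_qu(q_0,[t_0])$ and $\kappa_0=d_tu(q_0,[t_0])$ gives
\[
\tilde{H_2}(q_0,p_0,[t_0],\kappa_0)=\kappa_0+H_2(q_0,p_0,[t_0])=d_tu(q_0,[t_0])+H_2(q_0,d_qu(q_0,[t_0]),[t_0])=\alpha_{H_2}(0),
\]
which is the desired identity on $\stackrel{\scriptscriptstyle\ast}{A}^e_{H_1}$. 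Swapping the roles of $H_1$ and $H_2$ (using the same $u$, which lies in both $\mathcal{S}^{-}_{H_1}$ and $\mathcal{S}^{-}_{H_2}$) proves the second identity.

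The only step that requires care is the justification that the graph property and the differentiability of $u$ hold on $A_{H_1}$; this is a standard part of the time-periodic weak KAM theory recalled in Section 2, so I expect no genuine obstacle beyond citing it correctly. The essential new input compared with the autonomous case is Proposition 3.3, which supplies the common weak KAM solution in the absence of an energy integral.
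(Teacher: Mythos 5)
Your proof is correct and takes essentially the same route as the paper: choose a common backward weak KAM solution via Proposition 3.3, use the fact that the extended Aubry set of $H_1$ lies in the graph of its (extended) differential, and evaluate the $H_2$ Hamilton--Jacobi equation there. The paper phrases the last step through the closure $\overline{d\phi^*}$ and continuity of $\tilde{H_2}$ rather than invoking differentiability of the weak KAM solution on $A_{H_1}$, but this is a cosmetic difference.
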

\begin{proof}
Throughout this paper, $\overline{d\phi}$ denotes the closure of the set of $$\{\big{(}(q,[t]),d_{(q,[t])}\phi \big{)}|\phi
\text{ is differentiable at } (q,[t])\}.$$

Choose $\phi^* \in \mathcal{S}^{-}_{H_1} \cap \mathcal{S}^{-}_{H_2}$, we have
$\alpha_{H_2}(0)=\tilde{H_2}|_{\overline{d\phi^*}}$. Since $\stackrel{\scriptscriptstyle
\ast}{A}^e_{H_1} \subseteq \overline{d\phi^*}$, the first equality holds.
The second equality follows analogously.
\end{proof}

\section{Proof of Theorem 2}
Let $\phi \in \mathcal{S}^-_{H_1}$. Since $\phi$ is Lipschitz, $\phi$ is differentiable almost everywhere (with respect to Lebesgue measure (i.e., volume induced by the product Reimennian metric on $M \times \mathbb{T}$, the first factor is the Riemannian metric on $M$, the second one is the Euclidean metric on $\mathbb{T}$. In fact, which Riemannian metric we choose does not matter at all, since any the two volumes as measures with respect to any two Riemannian metrics on a closed Riemannian manifold are absolutely continuous with respect to each other), by Rademacher's theorem.  Now choose any differentiable point $(q_0,[t_0])$ of $\phi$, then there exists a unique backward minimizer $\gamma:(-\infty, t_0] \rightarrow M$ with $\gamma(t_0)=q_0$ and $\mathcal{L}_{H_1}(\dot{\gamma}(t)) \in d_q\phi$. Then the associated trajectory of $\tilde{H_1}$ is $$(\phi^s_{H_1(q_0,d_q\phi(q_0,t_0),t_0)},\kappa_0+H(q_0,d_q\phi(q_0,t_0),[t_0])-H(\phi^s_H(q,d_q\phi(q_0,t_0),[t_0]))),$$ here $s \in (-\infty,0]$, $\kappa_0+H_1(q_0,d_q(q_0,t_0),t_0)=\alpha_{H_1}(0)$. Since $\{\tilde{H_1}, \tilde{H_2}\}=0$, $\tilde{H_2}$ is constant along this trajectory, hence on its closure. Note that the limit set of  $\phi^s_{H_1(q_0,d_q(q_0,t_0),[t_0])}$ lies in the Aubry set $\stackrel{\scriptscriptstyle
\ast}{A}_{H_1}$, thus the limit set of $$(\phi^s_{H_1(q_0,d_q(q_0,t_0),t_0)},\kappa_0+H(q_0,d_q\phi(q_0,t_0),[t_0])-H(\phi^s_H(q,d_q\phi(q_0,t_0),[t_0])))$$ lies in the extended Aubry set $\stackrel{\scriptscriptstyle
\ast}{A}^e_{H_1}$, by the choice of $\kappa_0$. By Proposition 3.4, we have  $\tilde {H_2}|_{\overline{d\phi}}=\alpha_{H_2}(0)$. Thus,
\begin{equation}
H_2(q,d_q\phi(q,[t]),[t])+d_t\phi(q,[t])=\alpha_{H_2}(0)
\end{equation}
at any differentiable point  $(q,[t])$ of $\phi$.
By the relation of time 1-periodic viscosity solutions and weak KAM solutions, we know that $\phi$ is  a viscosity solution of Hamilton-Jacobi equation
$$H_1(q,d_q\phi,[t])+d_t\phi=\alpha_{H_1}(0).$$
Hence, $\phi$ is a \emph{semi-concave} (not necessarily with linear modulus, see \cite{CS} for the details) function on $M \times \mathbb{T}$. So, by (1), clearly we have $\phi$ is also a viscosity supersolution of
$$H_2(q,d_q\phi(q,[t]),[t])+d_t\phi(q,[t])=\alpha_{H_2}(0).$$

The deduction  that $\phi$ is viscosity subsolution of

$$H_2(q,d_q\phi(q,[t]),[t])+d_t\phi(q,[t])=\alpha_{H_2}(0)$$

from (1) also appeared in \cite{CS}, by the fact that the set of \emph{upper-differential} (this terminology was called subdifferential in \cite{CS}) is the convex hull of the set of \emph{reachable gradient}, and the fact that $\tilde {H_2}$ is convex (not necessary to be strictly convex) with respect to the variable $(p,\kappa)$ in our case. The reader is advised  to look at \cite{CS} for more details.

Now we have proved that $\phi$ is a backward weak KAM solution of
$$H_1(q,d_q\phi,[t])+d_t\phi=\alpha_{H_1}(0)$$
if and only if $\phi$ is a backward weak KAM solution of
$$H_2(q,d_q\phi,[t])+d_t\phi=\alpha_{H_2}(0),$$
That is $\mathcal{S}^-_{H_1}=\mathcal{S}^-_{H_2}$.

Analogously,  by the symmetric Hamiltonian method (as in \cite{Fa3},\cite{CL}, with slight modifications) we have that $\phi$ is a froward weak KAM solution of
$$H_1(q,d_q\phi,[t])+d_t\phi=\alpha_{H_1}(0)$$
if and only if $\phi$ is a forward weak KAM solution of
$$H_2(q,d_q\phi,[t])+d_t\phi=\alpha_{H_2}(0).$$
More precisely, let $\check H(q,p,[t])=H(q,-p,[-t])$, it is easy to check that
$\phi \in \mathcal{S}^+_{H}$ if and only if $\phi \in \mathcal{S}^-_{\check H}$, and $[H_1,H_2]=0$ if and only if $[\check {H_1}, \check {H_2}]=0$. Thus the discussion above  in this section applies.

Up to now, Theorem 2 is proved.

\section{Proof of Theorem 3}

Let us begin this section with a definition.
\begin{defi}
For a Tonelli Hamiltonian $H$, we
say that $\phi_-\in \mathcal{S}^-_{H}$ and $\phi_+\in
\mathcal{S}^+_{H}$ are conjugate with respect to $H$ if
$\phi_-=\phi_+$ on the projected Mather set $M_{H}$. If $\phi_-$ and
$\phi_+$ are conjugate with respect to $H$, we also denote this relation
 by $\phi_- \sim_H \phi_+$.
\end{defi}
Based on this definition, we can express equivalent definitions
\cite{Fa3} of $A_{H,c}$ and of $N_{H,c}$ as follows:
$$A_{H}=\cap \Big{\{}(q,[t]):\phi_-(q,[t])=\phi_+(q,[t]), \text{ where } \phi_-\in
\mathcal{S}^-_{H}, \phi_+\in \mathcal{S}^+_{H}, \phi_- \sim_H
\phi_+\Big{\}}$$ and
$$N_{H}=\cup \Big{\{}(q,[t]):\phi_-(q,[t])=\phi_+(q,[t]), \text{ where } \phi_-\in
\mathcal{S}^-_{H}, \phi_+\in \mathcal{S}^+_{H}, \phi_- \sim_H
\phi_+\Big{\}}.$$
Consequently, we also have \cite{Fa3}
$$\stackrel{\scriptscriptstyle
\ast}{A}^e_{H}=\cap\Big{\{}(q,p,[t],\kappa)|\phi_-(q,[t])=\phi_+(q,[t]),(p,\kappa)=d\phi_-=d\phi_+,\phi_-\in
\mathcal{S}^-_{H}, \phi_+\in \mathcal{S}^+_{H}, \phi_- \sim_H
\phi_+\Big{\}},$$
$$\stackrel{\scriptscriptstyle
\ast}{N}^e_{H}=\cup \Big{\{}(q,p,[t],\kappa)|\phi_-(q,[t])=\phi_+(q,[t]),(p,\kappa)=d\phi_-=d\phi_+,\phi_-\in
\mathcal{S}^-_{H}, \phi_+\in \mathcal{S}^+_{H}, \phi_- \sim_H
\phi_+\Big{\}},$$
$$\stackrel{\scriptscriptstyle
\ast}{A}_{H}=\cap\Big{\{}(q,p,[t])|\phi_-(q,[t])=\phi_+(q,[t]),p=d_q\phi_-=d_q\phi_+,\phi_-\in
\mathcal{S}^-_{H}, \phi_+\in \mathcal{S}^+_{H}, \phi_- \sim_H
\phi_+\Big{\}},$$  and
$$\stackrel{\scriptscriptstyle
\ast}{N}^e_{H}=\cup \Big{\{}(q,p,[t])|\phi_-(q,[t])=\phi_+(q,[t]),p=d_q\phi_-=d_q\phi_+,\phi_-\in
\mathcal{S}^-_{H}, \phi_+\in \mathcal{S}^+_{H}, \phi_- \sim_H
\phi_+\Big{\}}.$$
\begin{prop}
If $[H_1, H_2]=0$, then $\stackrel{\scriptscriptstyle
\ast}M^e_{H_1} \subseteq \stackrel{\scriptscriptstyle
\ast}A^e_{H_2}, \stackrel{\scriptscriptstyle \ast} M^e_{H_2}
\subseteq \stackrel{\scriptscriptstyle \ast}A^e_{H_1}$.
\end{prop}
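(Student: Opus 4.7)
The plan is to show, for the first inclusion (the second follows by swapping $H_1$ and $H_2$), that any point $(q_0, p_0, [t_0], \kappa_0) \in \stackrel{\scriptscriptstyle \ast}{M}^e_{H_1}$ satisfies the three defining properties of membership in $\stackrel{\scriptscriptstyle \ast}{A}^e_{H_2}$: (i) the energy relation $H_2(q_0,p_0,[t_0]) + \kappa_0 = \alpha_{H_2}(0)$; (ii) $(q_0,[t_0]) \in A_{H_2}$; and (iii) $p_0$ equals the $H_2$-Aubry momentum at $(q_0,[t_0])$. Condition (i) is immediate from Proposition 3.4, since $\stackrel{\scriptscriptstyle \ast}{M}^e_{H_1} \subseteq \stackrel{\scriptscriptstyle \ast}{A}^e_{H_1}$.

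For (ii), the strategy is to show that the Peierls barrier of $H_2$ vanishes at $((q_0,[t_0]),(q_0,[t_0]))$. Let $\gamma$ be the $H_1$-Mather orbit through $(q_0,p_0,[t_0])$, lifted to the extended phase space. Since $\stackrel{\scriptscriptstyle \ast}{M}^e_{H_1}$ is $\phi^t_{\tilde H_1}$-invariant, the lifted orbit stays in $\stackrel{\scriptscriptstyle \ast}{M}^e_{H_1}$, so both $\tilde H_1 \equiv \alpha_{H_1}(0)$ and, by (i) applied point-wise, $\tilde H_2 \equiv \alpha_{H_2}(0)$ along it. Writing $L_{H_i} = p\dot q - H_i$ with $\dot q = \partial_p H_1$, these two constancies yield
\[
L_{H_2} + \alpha_{H_2}(0) \equiv L_{H_1} + \alpha_{H_1}(0)
\]
identically along $\gamma$. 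Applying Poincar\'e recurrence to a Mather measure of $H_1$ supporting $\gamma$, with recurrence times chosen in $\mathbb{Z}^+$ so as to preserve the $\mathbb{T}$-fiber $[t_0]$, we obtain arbitrarily long $\gamma$-arcs whose endpoints converge to $(q_0,[t_0])$ and whose $(L_{H_1}+\alpha_{H_1}(0))$-action tends to $0$ by $H_1$-calibration. Closing each arc by a short curve in $M$ back to $q_0$ (whose $(L_{H_2}+\alpha_{H_2}(0))$-action tends to $0$ by standard Tonelli bounds), and invoking the identity above, we arrive at $h_{H_2}((q_0,[t_0]),(q_0,[t_0])) \leq 0$. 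The reverse inequality $h_{H_2}((q_0,[t_0]),(q_0,[t_0])) \geq 0$ follows from testing any backward weak KAM solution of $H_2$ against closed loops, so the barrier vanishes and $(q_0,[t_0]) \in A_{H_2}$.

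For (iii), Theorem 2 gives $\mathcal{S}^-_{H_2} = \mathcal{S}^-_{H_1}$, so any $\phi \in \mathcal{S}^-_{H_2}$ is also a backward weak KAM solution of $H_1$. Since $(q_0,[t_0])$ is an $H_1$-Mather point with a unique backward-calibrated $H_1$-orbit whose Legendre-lifted momentum at $[t_0]$ is $(p_0,\kappa_0)$, $\phi$ is differentiable there with $d\phi(q_0,[t_0]) = (p_0,\kappa_0)$. Together with (ii) and the bi-Lipschitz graph property $\pi: \stackrel{\scriptscriptstyle \ast}{A}^e_{H_2} \to A_{H_2}$, this pins down $(p_0,\kappa_0)$ as the unique $H_2$-Aubry momentum above $(q_0,[t_0])$.

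The main obstacle is step (ii): closing a long recurrent Mather arc into a genuine loop in $M \times \mathbb{T}$ whose $H_2$-action is arbitrarily small. Two subtleties distinguish the time-periodic setting from the autonomous one treated in \cite{CL}: first, the Poincar\'e recurrence must be arranged along integer times to keep both endpoints in the same $\mathbb{T}$-fiber, and second, the short closing arc in $M$ must be shown to carry negligible $(L_{H_2}+\alpha_{H_2}(0))$-action. Both points follow from standard Tonelli--Mather estimates but require careful bookkeeping absent in the autonomous case.
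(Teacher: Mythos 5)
Your step (ii) contains a fatal error in the claimed Lagrangian identity. You write $L_{H_i}=p\dot q-H_i$ with $\dot q=\partial_p H_1$, and from $\tilde H_1\equiv\alpha_{H_1}(0)$, $\tilde H_2\equiv\alpha_{H_2}(0)$ you conclude $L_{H_2}+\alpha_{H_2}(0)\equiv L_{H_1}+\alpha_{H_1}(0)$ along the $H_1$-Mather orbit $\gamma$. But $L_{H_2}(q,\dot q,[t])$ is obtained from the Legendre transform of $H_2$, which requires $\dot q=\partial_p H_2(q,p',[t])$ for the correct $p'$, and that $p'$ is in general not the $H_1$-momentum $p$ of $\gamma$. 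What the two energy constancies actually give at points of $\stackrel{\scriptscriptstyle\ast}{M}^e_{H_1}$ is $H_1(q,p,[t])-\alpha_{H_1}(0)=H_2(q,p,[t])-\alpha_{H_2}(0)=-\kappa$, hence $p\dot q-H_2(q,p,[t])+\alpha_{H_2}(0)=L_{H_1}(q,\dot q,[t])+\alpha_{H_1}(0)$. By Fenchel's inequality $L_{H_2}(q,\dot q,[t])\ge p\dot q-H_2(q,p,[t])$ with equality only when $\dot q=\partial_p H_2(q,p,[t])$, which is false along the $H_1$-flow in general; so the inequality points the wrong way for your action estimate, and the closing-up argument does not bound $h_{H_2}((q_0,[t_0]),(q_0,[t_0]))$ from above. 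Once (ii) collapses, (iii) has nothing to stand on.

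The paper sidesteps this entirely by following the $H_2$-orbit, not the $H_1$-orbit. The crucial preparatory fact is the invariance of $\stackrel{\scriptscriptstyle\ast}{M}^e_{H_1}$ under $\phi^t_{\tilde H_2}$ (Lemma 5.1, via Bernard's symplectic invariance of Mather sets). Along the $H_2$-orbit through $(q_0,p_0,[t_0],\kappa_0)$ one has $\dot q=\partial_p H_2$, so $(L_{H_2}+\alpha_{H_2}(0))\,dt=(p\,dq+\kappa\,dt)$ on the nose, and because that orbit stays inside $\stackrel{\scriptscriptstyle\ast}{M}^e_{H_1}\subseteq\bigcap_{\phi_-\in\mathcal S^-_{H_2}}\overline{d\phi_-}$, the integral of $p\,dq+\kappa\,dt$ is the increment of any $\phi_-\in\mathcal S^-_{H_2}$; combined with domination this shows the curve is an $L_{H_2}$-minimizer, giving $\stackrel{\scriptscriptstyle\ast}{M}^e_{H_1}\subseteq\stackrel{\scriptscriptstyle\ast}{N}^e_{H_2}$. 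To upgrade to $\stackrel{\scriptscriptstyle\ast}{A}^e_{H_2}$, the paper then shows $\rho_{H_2}$ vanishes between the $\alpha$- and $\omega$-limits of the $H_2$-orbit: Corollary 5.1 (from Massart's formula for $h_H$ on the Aubry set) gives $\rho_{H_2}=\rho_{H_1}$ there since $\mathcal S^-_{H_1}=\mathcal S^-_{H_2}$, and Mather's quadratic bound on $\rho_{H_1}$ along the $C^2$ curve $\pi\circ\phi^t_{\tilde H_2}$ contained in $M_{H_1}$ forces $\rho_{H_1}$, and hence $\rho_{H_2}$, to vanish in the limit. If you want to salvage your approach you must replace $\gamma$ by the $H_2$-flow trajectory and replace the erroneous Lagrangian identity by the exact-differential computation.
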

\begin{proof}
We only need to show that $\stackrel{\scriptscriptstyle
\ast}M^e_{H_1} \subseteq \stackrel{\scriptscriptstyle \ast}A^e_{H_2}
$, by the symmetry of $H_1$ and $H_2$. By weak KAM theory
\cite{Fa3}, we have
$$\stackrel{\scriptscriptstyle
\ast}M^e_{H_1} \subseteq \cap_{\phi_- \in \mathcal{S}^-_{H_1}}
\big{\{}d\phi_-\big{\}}=\cap_{\phi_- \in\mathcal{S}^-_{H_2}}
\big{\{}d\phi_-\big{\}}.$$

Now we need the following lemma:
\begin{lem}
$\stackrel{\scriptscriptstyle \ast}M^e_{H_1}$ is invariant under the flow  $\phi^t_{\tilde {H_2}}$.
\end{lem}
\begin{proof}
The proof follows from the symplectic invariance of Mather set \cite{Be1}, as cued by Sorrentino \cite{So}. In fact, for any fixed $t \in \mathbb{R}$, $\phi^t_{\tilde {H_2}}$ is a Hamiltonian diffeomorphism on $T^*(M \times \mathbb{T})$. Moreover, $\tilde{H_1}(\phi^t_{\tilde {H_2}})=\tilde{H_1}$, since $ \tilde {H_1}$ is constant on the trajectory of $\tilde {H_2}$. In other words, ${\phi^t_{\tilde {H_2}}}^*\tilde {H_1}=\tilde {H_1}$. By the result of Bernard \cite{Be1}, we have $$\phi^t_{\tilde {H_2}}(\stackrel{\scriptscriptstyle \ast}M^e_{{\phi^t_{\tilde {H_2}}}^*H_1})=\stackrel{\scriptscriptstyle \ast}M^e_{H_1},$$
here ${\phi^t_{\tilde {H_2}}}^*{H_1}$ is the function defined as:
$${\phi^t_{\tilde {H_2}}}^*{H_1}(q_0,p_0,[t_0])=\tilde H_1(\phi^{-t}_{\tilde {H_2}}(q_0,p_0,[t_0],\kappa_0))-\kappa_0.$$
Note that the definition of ${\phi^t_{\tilde {H_2}}}^*{H_1}$ is independent of the choice of $\kappa_0$.

Since ${\phi^t_{\tilde {H_2}}}^*\tilde {H_1}=\tilde {H_1}$, we have ${\phi^t_{\tilde {H_2}}}^* H_1=\ H_1$ and
$$\phi^t_{\tilde {H_2}}(\stackrel{\scriptscriptstyle \ast}M^e_{H_1})=\stackrel{\scriptscriptstyle \ast}M^e_{H_1}$$
consequently.
\end{proof}

In \cite{Mas},  the following lemma appeared:
\begin{lem}
For any two points $(q_0,[t_0]),(q_1,[t_1]) \in M \times \mathbb{T}$, we have the following equality

$$h_{H}((q_0,[t_0]),(q_1,[t_1]))=\sup\Big{\{}\phi_-(q_1,[t_1])-\phi_+(q_0,[t_0]):\phi_-\in
\mathcal{S}^-_{H}, \phi_+\in \mathcal{S}^+_{H}, \phi_- \sim_H
\phi_+\Big{\}}.$$ Moreover, for any given two points $(q_0,[t_0]),(q_1,[t_1]) \in M\times \mathbb{T}$, this
supremum is actually attained.
\end{lem}

As a consequence of this lemma, together with the definition of
conjugate pair of weak KAM solutions,  we have
\begin{cor}
$$h_{H}((q_0,[t_0]),(q_1,[t_1]))=\sup \Big{\{}\phi_-(q_1,[t_1])-\phi_-(q_0,[t_0]):\phi_-\in
\mathcal{S}^-_{H}\Big{\}}$$ for any two points $(q_0,[t_0]),(q_1,[t_1]) \in A_{H}$. Moreover, for any given two points $(q_0,[t_0]),(q_1,[t_1]) \in A_{H}$, this
supremum is actually attained.
\end{cor}

Choose any point $(q_0,p_0,[t_0],\kappa_0) \in \stackrel{\scriptscriptstyle
\ast}M^e_{H_1}$, we will show that $(q_0,p_0,[t_0],\kappa_0) \in
\stackrel{\scriptscriptstyle \ast}A^e_{H_2}$.

First, we will show that $\pi \circ \phi^t_{\tilde {H_2}}(q_0,p_0,[t_0],\kappa_0)$ is a
minimizer with respect to $L_{H_2}$. For any $t_1 < t_2 \in
\mathbb{R}$, we have
\begin{eqnarray*}
&&\int^{t_2}_{t_1}(L_{H_2}+\alpha_{H_2}(0))(\pi \circ
\phi^t_{H_2}(q_0,p_0,[t_0]), \frac{d}{dt}({\pi \circ \phi}^t_{H_2}(q_0,p_0,[t_0])),[t+t_0])dt\\
&=& \int^{t_2}_{t_1}L_{H_2}(\pi \circ
\phi^t_{H_2}(q_0,p_0,[t_0]), \frac{d}{dt}({\pi \circ \phi}^t_{H_2}(q_0,p_0, [t_0])),[t+t_0])dt\\
&+&\int^{t_2}_{t_1}\tilde{H_2}(\phi^t_{\tilde {H_2}}(q_0,p_0,[t_0],\kappa_0))dt\\
&=& \int^{t_2}_{t_1}L_{H_2}(\pi \circ
\phi^t_{H_2}(q_0,p_0,[t_0]), \frac{d}{dt}({\pi \circ \phi}^t_{H_2}(q_0,p_0,[t_0])),[t+t_0])dt\\
&+&\int^{t_2}_{t_1}(H_2(\phi^t_{H_2}(q_0,p_0,[t_0]))+\kappa(t))dt\\
&=&\int^{t_2}_{t_1}(pdq+\kappa dt) (\phi^t_{\tilde {H_2}}(q_0,p_0,[t_0],\kappa_0))dt\\
 &=&\phi_-(\pi \circ \phi^{t_2}_{\tilde {H_2}}(q_0,p_0,[t_0],\kappa_0))-\phi_-(\pi \circ
\phi^{t_1}_{\tilde{H_2}}(q_0,p_0,[t_0],\kappa_0))\\
&\leq & \phi_{H_2}((\pi \circ \phi^{t_1}_{\tilde {H_2}}(q_0,p_0,[t_0],\kappa_0),[t_0+t_1]), (\pi \circ \phi^{t_2}_{\tilde {H_2}}(q_0,p_0,[t_0],\kappa_0),[t_0+t_2])),
\end{eqnarray*}
where $\phi_- \in
\mathcal{S}^-_{H_1}(=\mathcal{S}^-_{H_2})$; $\kappa(t)$ is the last component of the flow $\phi^t_{\tilde{H_2}}(q_0,p_0,[t_0],\kappa_0)$; the first
equality follows from the fact that
$\tilde {H_2}(\phi^t_{\tilde {H_2}}(q_0,p_0,[t_0],\kappa_0))=\alpha_{H_2}(0)$, since $\tilde {H_2}|_{\stackrel{\scriptscriptstyle \ast}A^e_{H_1}}=\alpha_{H_2}(0)$; the fourth
equality follows from the fact that
 $$\phi^t_{\tilde {H_2}}(q_0,p_0,[t_0],\kappa_0) \in \stackrel{\scriptscriptstyle \ast}M^e_{H_1} \subset \cap_{\phi_- \in\mathcal{S}^-_{H_1}}\big{\{}d\phi_-\big{\}}=\cap_{\phi_- \in\mathcal{S}^-_{H_2}}
\big{\{}d\phi_-\big{\}};$$
the inequality follows from the fact
that $\phi_- \in \mathcal{S}^-_{H_2}$ (Since $\phi_- \in \mathcal{S}^-_{H_2}$, then $\phi_- \prec L_{H_2}+\alpha_{H_2}(0)$). Thus,
$$\stackrel{\scriptscriptstyle \ast}M^e_{H_1} \subseteq
\stackrel{\scriptscriptstyle \ast}N^e_{H_2}.$$

Hence, we only need
to show that $\rho_{H_2}((q_{\alpha},[t_{\alpha}]),(q_{\omega},[t_{\omega}]))=0$, for any point
$(q_{\alpha},[t_{\alpha}])$ lies in the $\alpha$-limit set and any point $(q_{\omega},[t_{\omega}])$ lies in
the $\omega$-limit set of $\pi \circ \phi^t_{H_2}(q_0,p_0)$ in $M\times \mathbb{T}$.
Clearly, both  $(q_{\alpha},[t_{\alpha}])$ and $(q_{\omega},[t_{\omega}])$ lie in $A_{H_2}$,
since $\pi \circ \phi^t_{H_2}(q_0,p_0,[t_0])$ is a minimizer with respect
to $L_{H_2}$. Thus, we can use the formula in Corollary 5.1 to
calculate $\rho_{H_2}((q_{\alpha},[t_{\alpha}]),(q_{\omega},[t_{\omega}]))$:
\begin{eqnarray*}
&&\rho_{H_2}((q_{\alpha},[t_{\alpha}]),(q_{\omega},[t_{\omega}]))\\
&=&h_{H_2}((q_{\alpha},[t_{\alpha}]),(q_{\omega},[t_{\omega}]))+
h_{H_2}((q_{\omega},[t_{\omega}]),(q_{\alpha},[t_\alpha]))\\
&=&\sup \Big{\{}\phi_-(q_{\omega},[t_{\omega}])-\phi_-(q_{\alpha},[t_{\alpha}]):\phi_-\in
\mathcal{S}^-_{H_2}\Big{\}}\\&+&\sup
\Big{\{}\psi_-(q_{\alpha},[t_{\alpha}])-\psi_-(q_{\omega},[t_{\omega}]):\psi_-\in
\mathcal{S}^-_{H_2}\Big{\}}\\&=& \sup
\Big{\{}\phi_-(q_{\omega},[t_{\omega}])-\phi_-(q_{\alpha},[t_{\alpha}]):\phi_-\in
\mathcal{S}^-_{H_1}\Big{\}}\\&+&\sup
\Big{\{}\psi_-(q_{\alpha},[t_{\alpha}])-\psi_-(q_{\omega},[t_{\omega}]):\psi_-\in
\mathcal{S}^-_{H_1}\Big{\}}\\&=&
h_{H_1}((q_{\alpha},[t_{\alpha}]),(q_{\omega},[t_{\omega}]))+
h_{H_1}((q_{\omega},[t_{\omega}]),(q_{\alpha},[t_{\alpha}]))\\
&=&\rho_{H_1}((q_{\alpha},[t_{\alpha}]),(q_{\omega},[t_{\omega}])),
\end{eqnarray*}
where, the second and the fourth qualities  follow from Corollary 5.1; the third
equality follows from the fact that
$\mathcal{S}^-_{H_1}=\mathcal{S}^-_{H_2}$.

Now we claim that $\rho_{H_1}((q_{\alpha},[t_{\alpha}]),(q_{\alpha},[t_{\omega}]))=0$.  Since
$(q_{\alpha},[t_{\alpha}])$ and $(q_{\omega},[t_{\omega}])$ lie in the $\alpha$-limit set and
$\omega$-limit set of $\pi \circ \phi^t_{H_2}(q_0,p_0,[t_0])$
respectively, there exist $t_i, t_k \rightarrow +\infty$ as $i, k
\rightarrow +\infty$, such that $$ \pi \circ
\phi^{-t_i}_{H_2}(q_0,p_0,[t_0]) \rightarrow (q_{\alpha},[t_{\alpha}]), \,\,\,\,\, \pi
\circ \phi^{t_k}_{H_2}(q_0,p_0,[t_0]) \rightarrow (q_{\omega},[t_{\omega}]).$$  Now we
have $$\rho_{H_1}(\pi \circ \phi^{-t_i}_{H_2}(q_0,p_0),\pi \circ
\phi^{t_k}_{H_2}(q_0,p_0,[t_0]))=0,$$ since  $\pi \circ
\phi^{-t_i}_{H_2}(q_0,p_0,[t_0])$ and $\pi \circ
\phi^{t_k}_{H_2}(q_0,p_0,[t_0])$ can be connected by a $C^2$ curve $\pi
\circ \phi^t_{H_2}(q_0,p_0,[t_0])$ which lies in $M_{H_1}$ and
$\rho_{H_1}$ satisfies
$$\rho_{H_1}((q_0,[t_0]),(q_1,[t_1]) \leq C(d(q_0,q_1)^2+|[t_1]-[t_0]|^2)$$ for each $(q_0,[t_0]),(q_1,[t_1]) \in
A_{H_1}$ \cite{Mat2}, where $C$ is a constant, $d$ is the
distance induced by the Riemannian metric, and $|[t_1]-[t_0]|$ means the distance of $[t_0]$ and $[t_1]$ on the circle $\mathbb{R}/\mathbb{Z}$. So,
$$\rho_{H_1}((q_{\alpha},[t_{\alpha}]),(q_{\omega},[t_{\omega}]))= 0,$$
by taking a limit.

Thus, Proposition 5.1 follows.
\end{proof}
\begin{prop}
Assume that $[H_1, H_2]=0$. Let $\phi_- \in
\mathcal{S}^-_{H_1}=\mathcal{S}^-_{H_2},$ $\phi_+ \in
\mathcal{S}^+_{H_1}=\mathcal{S}^+_{H_2}.$ Then $\phi_-$ and
$\phi_+$ are conjugate with respect to $H_1$ if and only if $\phi_-$ and
$\phi_+$ are conjugate with respect to $H_2$, i.e., $\phi_- \sim_{H_1} \phi_+
\iff \phi_- \sim_{H_2}\phi_+.$
\end{prop}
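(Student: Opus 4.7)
The plan is to deduce Proposition 5.2 directly from Proposition 5.1 together with the (standard) fact from weak KAM theory that a conjugate pair of weak KAM solutions agrees not merely on the projected Mather set but on the entire projected Aubry set. By the symmetric roles of $H_1$ and $H_2$, it suffices to prove one direction, say $\phi_- \sim_{H_1} \phi_+ \Rightarrow \phi_- \sim_{H_2} \phi_+$.

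First I would unpack the equivalent description of the Aubry set recalled at the beginning of this section, namely
$$A_{H} = \bigcap \Bigl\{(q,[t]) : \phi_-(q,[t]) = \phi_+(q,[t]),\ \phi_- \in \mathcal{S}^-_H,\ \phi_+ \in \mathcal{S}^+_H,\ \phi_- \sim_H \phi_+ \Bigr\}.$$
Reading this as an intersection of agreement sets immediately yields the key extension principle: for any fixed conjugate pair $\phi_- \sim_H \phi_+$, the identity $\phi_-(q,[t]) = \phi_+(q,[t])$ holds for all $(q,[t]) \in A_H$, not only on $M_H$.

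Now suppose $\phi_- \sim_{H_1} \phi_+$. Since Theorem 2 gives $\mathcal{S}^{\pm}_{H_1} = \mathcal{S}^{\pm}_{H_2}$, the pair $(\phi_-,\phi_+)$ lives in the correct spaces of solutions for both Hamiltonians. The assumption $\phi_- \sim_{H_1} \phi_+$ means $\phi_- = \phi_+$ on $M_{H_1}$; by the extension principle just recalled applied to $H_1$, we upgrade this to
$$\phi_-(q,[t]) = \phi_+(q,[t]) \quad \text{for all } (q,[t]) \in A_{H_1}.$$
From Proposition 5.1 we have $\stackrel{\scriptscriptstyle \ast}{M}^e_{H_2} \subseteq \stackrel{\scriptscriptstyle \ast}{A}^e_{H_1}$, and projecting to $M \times \mathbb{T}$ yields $M_{H_2} \subseteq A_{H_1}$. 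Consequently $\phi_- = \phi_+$ on $M_{H_2}$, which is exactly $\phi_- \sim_{H_2} \phi_+$. The reverse implication is symmetric, using $M_{H_1} \subseteq A_{H_2}$ from the other half of Proposition 5.1.

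I do not anticipate a real obstacle here: once Proposition 5.1 and Theorem 2 are in hand, the proof reduces to invoking Fathi's characterization of the Aubry set as the intersection of agreement sets of conjugate pairs. The only care required is to make sure the extension "equal on $M_H$ $\Rightarrow$ equal on $A_H$" is cited or recalled cleanly — it follows from the stated equivalent definition of $A_H$, but if a cleaner reference is preferred one can derive it directly from Lemma 5.2 (Mañé's formula) by noting that for any $(q,[t]) \in A_H$ one has $h_H(\xi,(q,[t])) + h_H((q,[t]),\xi) = 0$ for some $\xi \in M_H$, and then using $\phi_-(q,[t]) - \phi_-(\xi) \le h_H(\xi,(q,[t]))$ together with the analogous inequality for $\phi_+$ and the hypothesis $\phi_-(\xi) = \phi_+(\xi)$.
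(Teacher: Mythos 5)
Your proof is correct and follows essentially the same route as the paper's: invoke Proposition 5.1 to get $M_{H_2} \subseteq A_{H_1}$ (and symmetrically), and combine it with Fathi's characterization of $A_H$ as the intersection of agreement sets of conjugate pairs, which yields the extension principle that a conjugate pair for $H_1$ agrees on all of $A_{H_1}$ and hence on $M_{H_2}$. The paper states this as a one-line "direct consequence," whereas you have usefully spelled out the intermediate step; there is no genuine difference in method.
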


\begin{proof}
It is a direct consequence of Proposition 5.1  and the fact
\cite{Fa3} that
$$A_{H}=\cap \big{\{}(q,[t])| \phi_-(q,[t])=\phi_+(q,[t]), \phi_-\in
\mathcal{S}^-_{H}, \phi_+\in \mathcal{S}^+_{H}, \phi_- \sim_H
\phi_+\big{\}}.$$
\end{proof}

In \cite{Fa1},\cite{Fa3}, Fathi showed that
\begin{lem}
$$B_{H}(q,[t])=\sup \Big{\{}\phi_{-}(q,[t])-\phi_{+}(q,[t])|\phi_-\in
\mathcal{S}^-_{H}, \phi_+\in \mathcal{S}^+_{H}, \phi_- \sim_H
\phi_+\Big{\}},$$ and, moreover, the supremum is attained  for each
$(q,[t])\in M \times \mathbb{T}$.
\end{lem}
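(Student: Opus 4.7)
The plan is to obtain Lemma 5.3 as the diagonal specialization of Lemma 5.2, which is the only real content required. By definition $B_H(q,[t]) = h_H((q,[t]),(q,[t]))$, so setting $(q_0,[t_0]) = (q_1,[t_1]) = (q,[t])$ in Lemma 5.2 yields both the claimed supremum representation and the attainment clause at once. No further argument is necessary beyond Lemma 5.2.

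For completeness I would also convince myself that Lemma 5.2 is genuinely available in the time-periodic setting by reproducing the standard weak KAM strategy. The upper bound comes from combining domination for $\phi_-$, the dual inequality for $\phi_+$, and the equality $\phi_- = \phi_+$ forced on $M_H$ by conjugacy: inserting a suitable point $(\xi,[t_\xi]) \in M_H$ into a near-minimizing curve from $(q_0,[t_0])$ to $(q_1,[t_1])$ and summing the two domination inequalities telescopes to $\phi_-(q_1,[t_1]) - \phi_+(q_0,[t_0]) \le h_H((q_0,[t_0]),(\xi,[t_\xi])) + h_H((\xi,[t_\xi]),(q_1,[t_1]))$, which one then squeezes down to $h_H((q_0,[t_0]),(q_1,[t_1]))$ by choosing $(\xi,[t_\xi])$ so that the Peierls triangle inequality is essentially tight. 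The reverse inequality and attainment are produced by constructing an explicit maximizing conjugate pair calibrated along a Peierls-optimal curve threading the Aubry set, built via the Lax--Oleinik operators $T^{\mp}_{H,\cdot,\cdot}$ applied to appropriate seed data on $A_H$.

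The main obstacle, which is precisely the nontrivial content of Lemma 5.2, is the existence of a Peierls-optimal curve through $M_H$ realizing the factorization $h_H((q_0,[t_0]),(q_1,[t_1])) = h_H((q_0,[t_0]),(\xi,[t_\xi])) + h_H((\xi,[t_\xi]),(q_1,[t_1]))$ for some $(\xi,[t_\xi]) \in M_H$; this is what permits the supremum to be restricted from arbitrary pairs $(\phi_-,\phi_+)$ to conjugate pairs. In the time-periodic setting this step is handled in \cite{BR}, \cite{Be4} using compactness of the Lax--Oleinik semigroup and equi-Lipschitz regularity of weak KAM solutions, both of which are recalled in Section 2. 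Granted Lemma 5.2, the diagonal specialization is immediate and delivers Lemma 5.3 with its attainment assertion.
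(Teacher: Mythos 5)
Your observation is correct and complete: since $B_H(q,[t]) = h_H((q,[t]),(q,[t]))$ by definition, setting $(q_0,[t_0]) = (q_1,[t_1]) = (q,[t])$ in Lemma 5.2 immediately yields the supremum representation and the attainment claim of Lemma 5.3. The paper itself supplies no proof of Lemma 5.3 — it simply attributes the result to Fathi \cite{Fa1},\cite{Fa3} — so your derivation is a genuinely different (and cleaner) route: it exhibits Lemma 5.3 as a one-line corollary of the already-cited Lemma 5.2 rather than as an independent external result. What this buys is economy: within the logical architecture of Section 5, only one of the two lemmas needs to be imported from the literature, the other being its diagonal specialization. Your additional sketch of why Lemma 5.2 holds in the time-periodic setting is not required for the statement at hand (the paper also takes Lemma 5.2 as a cited fact from \cite{Mas}), but it is sound and does no harm.
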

 Recall that
\begin{eqnarray*}
b_{H}(q,[t])&=&\inf_{\xi,\zeta \in A_{H}}\Big{\{}h_{H}(\xi,
(q,[t]))+h_{H}((q,[t]),\zeta) -h_{H}(\xi,\zeta)\Big{\}}\\
&\Big{(}&=\min_{\xi,\zeta \in A_{H}}\Big{\{}h_{H}(\xi,
(q,[t]))+h_{H}((q,[t]),\zeta) -h_{H}(\xi,\zeta)\Big{\}}\Big{)}.
\end{eqnarray*}

 In fact, the autonomous version of following lemma also appeared in \cite{Fa1}, for the proof, see also \cite{CL}. We can obtain the time-periodic version with slight modifications:

\begin{lem}
$$b_{H}(q,[t])=\inf\Big{\{}u_{-}(q,[t])-u_{+}(q,[t])|u_-\in
\mathcal{S}^-_{H}, u_+\in \mathcal{S}^+_{H}, u_- \sim_H
u_+\Big{\}}.$$ Moreover,  the infimum is attained for each $(q,[t])\in M \times \mathbb{T}$.
\end{lem}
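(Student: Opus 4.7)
The plan is to follow the autonomous argument of Fathi \cite{Fa1} and \cite{CL}, adapted to the time-periodic setting, by establishing two inequalities separately. Throughout, $u_-\sim_H u_+$ denotes an arbitrary conjugate pair.

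For the \emph{lower bound} $b_H(q,[t])\leq u_-(q,[t])-u_+(q,[t])$, I would first produce a backward calibrated curve $\gamma_-:(-\infty,t]\to M$ for $u_-$ with $\gamma_-(t)=q$ and a forward calibrated curve $\gamma_+:[t,\infty)\to M$ for $u_+$ starting at $q$. Standard weak KAM theory, valid also in the time-periodic case, guarantees that the $\alpha$-limit of $(\gamma_-(s),[s])$ and the $\omega$-limit of $(\gamma_+(s),[s])$ are non-empty subsets of $A_H$; pick $\xi$ and $\zeta$ in them respectively. Calibration combined with the definition of $h_H$ as a $T\to\infty$ limit yields
$$u_-(q,[t])-u_-(\xi)=h_H(\xi,(q,[t])), \qquad u_+(\zeta)-u_+(q,[t])=h_H((q,[t]),\zeta).$$
Using $u_-=u_+$ on $A_H$, this rewrites as
$$u_-(q,[t])-u_+(q,[t]) = h_H(\xi,(q,[t]))+h_H((q,[t]),\zeta)+[u_-(\xi)-u_-(\zeta)].$$
Corollary 5.1 applied to $\xi,\zeta\in A_H$ gives $u_-(\zeta)-u_-(\xi)\leq h_H(\xi,\zeta)$, so the bracketed term is at least $-h_H(\xi,\zeta)$, whence $u_-(q,[t])-u_+(q,[t])\geq h_H(\xi,(q,[t]))+h_H((q,[t]),\zeta)-h_H(\xi,\zeta)\geq b_H(q,[t])$.

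For the \emph{matching upper bound and attainment}, I would build a minimizing conjugate pair explicitly. By compactness of $A_H$ and continuity of $h_H$, choose $(\xi^*,\zeta^*)\in A_H\times A_H$ realizing $b_H(q,[t])=h_H(\xi^*,(q,[t]))+h_H((q,[t]),\zeta^*)-h_H(\xi^*,\zeta^*)$, and set
$$u_-(x,[s]):=h_H(\xi^*,(x,[s])), \qquad u_+(x,[s]):=h_H(\xi^*,\zeta^*)-h_H((x,[s]),\zeta^*).$$
The standard (time-periodic) weak KAM facts produce $u_-\in\mathcal{S}^-_H$ and $u_+\in\mathcal{S}^+_H$ because $\xi^*,\zeta^*\in A_H$. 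For any $\eta\in M_H\subseteq A_H$, the difference $u_-(\eta)-u_+(\eta)=h_H(\xi^*,\eta)+h_H(\eta,\zeta^*)-h_H(\xi^*,\zeta^*)$ equals $b_H(\eta)$; and $b_H(\eta)=0$ on $A_H$ since subadditivity of $h_H$ forces $b_H\geq 0$, while any regular minimizer through $\eta$ saturates the triangle inequality, giving equality. Thus $u_-\sim_H u_+$, and $u_-(q,[t])-u_+(q,[t])=b_H(q,[t])$ by construction.

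The main obstacle beyond the autonomous case is verifying that $h_H(\xi^*,\cdot)$ and $-h_H(\cdot,\zeta^*)$ are bona fide time-periodic weak KAM solutions, in particular that a backward (resp.\ forward) calibrated curve exists at every base-time $[\tau]\in\mathbb{T}$ and not merely at the base-times of $\xi^*$ or $\zeta^*$. This uses invariance and recurrence of the Hamiltonian flow on $\dot{A}_H$ to realign the base-time class, which is the time-periodic adaptation of Fathi's construction \cite{Fa3} already implicit in Lemma 5.2 above.
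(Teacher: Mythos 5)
The lower-bound half of your argument is essentially correct. The attainment half has a genuine gap: the pair $(u_-,u_+)=(h_H(\xi^*,\cdot),\;h_H(\xi^*,\zeta^*)-h_H(\cdot,\zeta^*))$ need not be conjugate. To justify $u_-\sim_H u_+$ you assert that for $\eta\in M_H$ the quantity $h_H(\xi^*,\eta)+h_H(\eta,\zeta^*)-h_H(\xi^*,\zeta^*)$ equals $b_H(\eta)=0$; but $(\xi^*,\zeta^*)$ was chosen to minimize $b_H$ at $(q,[t])$, not at $\eta$, so one only gets $h_H(\xi^*,\eta)+h_H(\eta,\zeta^*)-h_H(\xi^*,\zeta^*)\geq b_H(\eta)$, and the inequality can be strict. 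For instance, if $A_H=M_H$ consists of two static classes $\{\xi_1\}$ and $\{\xi_2\}$ with $h_H(\xi_1,\xi_2)+h_H(\xi_2,\xi_1)>0$ and the minimizing pair for $(q,[t])$ is $\xi^*=\zeta^*=\xi_1$, then $u_-(\xi_2)-u_+(\xi_2)=h_H(\xi_1,\xi_2)+h_H(\xi_2,\xi_1)>0$, so $u_-\neq u_+$ on $M_H$ and the pair is not admissible. Your $u_+$ is a legitimate element of $\mathcal{S}^+_H$, just not the conjugate one.

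The repair is the route that Fathi \cite{Fa1} and \cite{CL} actually follow (and which the paper refers to, saying only slight modifications are needed for the time-periodic setting): keep $u_-=h_H(\xi^*,\cdot)\in\mathcal{S}^-_H$, but take $u_+$ to be the conjugate forward weak KAM solution of $u_-$ rather than prescribing it by a formula; its existence follows from the weak KAM facts recalled in Section~2. Then domination of $u_+$ together with conjugacy at $\zeta^*\in A_H$ gives $u_+(q,[t])\geq u_+(\zeta^*)-h_H((q,[t]),\zeta^*)=u_-(\zeta^*)-h_H((q,[t]),\zeta^*)=h_H(\xi^*,\zeta^*)-h_H((q,[t]),\zeta^*)$, hence $u_-(q,[t])-u_+(q,[t])\leq h_H(\xi^*,(q,[t]))+h_H((q,[t]),\zeta^*)-h_H(\xi^*,\zeta^*)=b_H(q,[t])$, and your lower bound forces equality and attainment.
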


Now Theorem 3 follows from Theorem 1, Proposition 5.2, Lemma 5.3
and Lemma 5.4.

Clearly, we also have
\begin{cor}
$(\bar{A}_{H_1},\rho_{H_1})$ and $(\bar{A}_{H_2},\rho_{H_2})$ are isometric.
\end{cor}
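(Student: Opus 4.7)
The plan is to observe that, under the hypothesis $[H_1,H_2]=0$, the two quotient Aubry sets already agree \emph{before} passing to the quotient: the underlying projected Aubry sets coincide, and the Mather semi-metrics restrict to the same function on that common set. Once this is established the conclusion is immediate, since the identity map on the common projected Aubry set descends to a well-defined isometry of the quotients.

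Concretely, I would first invoke Theorem 4 to conclude $\stackrel{\scriptscriptstyle \ast}{A}^e_{H_1}=\stackrel{\scriptscriptstyle \ast}{A}^e_{H_2}$, and then project along the cotangent fibers using the bi-Lipschitz map $\pi$ (mentioned immediately after the definition of the extended Aubry set) to obtain $A_{H_1}=A_{H_2}$; call this common set $A$. Next, I would apply Corollary 5.1 to each $H_i$ separately: for any $(q_0,[t_0]),(q_1,[t_1]) \in A$,
$$h_{H_i}((q_0,[t_0]),(q_1,[t_1]))=\sup\bigl\{\phi_-(q_1,[t_1])-\phi_-(q_0,[t_0]):\phi_-\in \mathcal{S}^-_{H_i}\bigr\}, \quad i=1,2.$$
By Theorem 2, $\mathcal{S}^-_{H_1}=\mathcal{S}^-_{H_2}$, so the two suprema are taken over the same collection of functions, and hence $h_{H_1}=h_{H_2}$ on $A\times A$. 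Symmetrizing in the two orderings of the arguments then yields $\rho_{H_1}\equiv \rho_{H_2}$ on $A\times A$.

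Finally, since $\rho_{H_1}$ and $\rho_{H_2}$ coincide as pseudo-metrics on the common set $A$, the equivalence relations ``$\rho_{H_i}$-distance zero'' coincide, so the two quotient sets $\bar{A}_{H_1}$ and $\bar{A}_{H_2}$ are literally equal as sets, and the induced metrics agree; the identity map is the claimed isometry. I do not anticipate any real obstacle here: the corollary is essentially a bookkeeping consequence of Theorem 2, Theorem 4, and Corollary 5.1, and the only point that requires attention is that the hypothesis ``points lie in the Aubry set'' needed by Corollary 5.1 is indeed met, which is precisely what the equality $A_{H_1}=A_{H_2}$ from Theorem 4 secures.
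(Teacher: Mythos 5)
Your argument is correct and is essentially what the paper leaves implicit behind the word ``Clearly'': once one knows $A_{H_1}=A_{H_2}$ and that $h_{H_1}=h_{H_2}$ on this common set, the quotients coincide on the nose and the identity is the isometry. Both of your ingredients are sound: Corollary 5.1 applied with $\mathcal{S}^{-}_{H_1}=\mathcal{S}^{-}_{H_2}$ (Theorem 2) gives $h_{H_1}=h_{H_2}$ on $A\times A$, hence $\rho_{H_1}=\rho_{H_2}$ there.

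One small presentational caveat: you invoke Theorem 4 to get $A_{H_1}=A_{H_2}$, but Theorem 4 is only proved in the following section, after this corollary is stated. This creates no circularity (Theorem 4's proof does not use Corollary 5.2), but if you want to keep the argument self-contained within Section 5 you can bypass Theorem 4 entirely: the equality $A_{H_1}=A_{H_2}$ follows directly from the characterization
$$A_{H}=\bigcap \big\{(q,[t])\,:\,\phi_-(q,[t])=\phi_+(q,[t]),\ \phi_-\in\mathcal{S}^{-}_{H},\ \phi_+\in\mathcal{S}^{+}_{H},\ \phi_-\sim_H\phi_+\big\},$$
together with Theorem 2 ($\mathcal{S}^{\pm}_{H_1}=\mathcal{S}^{\pm}_{H_2}$) and Proposition 5.2 ($\phi_-\sim_{H_1}\phi_+\iff\phi_-\sim_{H_2}\phi_+$), all of which are available before the corollary. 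This is in fact the same reasoning the paper later uses to prove Theorem 4, so the substance of your argument agrees with the paper's.
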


\section{Proof of Theorem 4}
Since
$$\stackrel{\scriptscriptstyle
\ast}{A}^e_{H}=\cap\Big{\{}(q,p,[t],\kappa)|\phi_-(q,[t])=\phi_+(q,[t]),(p,\kappa)=d\phi_-=d\phi_+,\phi_-\in
\mathcal{S}^-_{H}, \phi_+\in \mathcal{S}^+_{H}, \phi_- \sim_H
\phi_+\Big{\}}$$ and
$$\stackrel{\scriptscriptstyle
\ast}{N}^e_{H}=\cup \Big{\{}(q,p,[t],\kappa)|\phi_-(q,[t])=\phi_+(q,[t]),(p,\kappa)=d\phi_-=d\phi_+, \phi_-\in
\mathcal{S}^-_{H}, \phi_+\in \mathcal{S}^+_{H}, \phi_- \sim_H
\phi_+\Big{\}}.$$

 Theorem 4 follows from Theorem 1 and Proposition 3.2.

 \begin{cor}
 Let $H_1$ and $H_2$ be two Tonelli Hamiltonians. If $[H_1,H_2]=0$, then $$\stackrel{\scriptscriptstyle
\ast}{A}_{H_1}=\stackrel{\scriptscriptstyle
\ast}{A}_{H_2},\,\,\, A_{H_1}=A_{H_2}$$and
$$\stackrel{\scriptscriptstyle
\ast}{N}_{H_1}=\stackrel{\scriptscriptstyle
\ast}{N}_{H_2}, \,\,\, N_{H_1}=N_{H_2}.$$
 \end{cor}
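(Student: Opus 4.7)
The plan is to extract the non-extended equalities directly from Theorem 4 by simply forgetting the auxiliary coordinate $\kappa$. For any Tonelli Hamiltonian $H$, introduce the forgetful map
$$\Pi \colon T^{*}(M\times\mathbb{T}) \to T^{*}M\times\mathbb{T}, \qquad (q,p,[t],\kappa)\mapsto (q,p,[t]).$$
The first observation I will record is that $\Pi$ restricts to a bijection between $\stackrel{\scriptscriptstyle\ast}{A}^e_{H}$ and $\stackrel{\scriptscriptstyle\ast}{A}_{H}$, and likewise between $\stackrel{\scriptscriptstyle\ast}{N}^e_{H}$ and $\stackrel{\scriptscriptstyle\ast}{N}_{H}$. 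Surjectivity of each restriction is built into the definitions of the extended sets in Section~1, and injectivity is immediate because the energy relation $H(q,p,[t])+\kappa=\alpha_H(0)$ determines $\kappa$ uniquely in terms of $(q,p,[t])$.

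Given this observation, Theorem 4 immediately yields
$$\stackrel{\scriptscriptstyle\ast}{A}_{H_1}=\Pi(\stackrel{\scriptscriptstyle\ast}{A}^e_{H_1})=\Pi(\stackrel{\scriptscriptstyle\ast}{A}^e_{H_2})=\stackrel{\scriptscriptstyle\ast}{A}_{H_2},$$
and the identical reasoning applied to the Ma\~n\'e case gives $\stackrel{\scriptscriptstyle\ast}{N}_{H_1}=\stackrel{\scriptscriptstyle\ast}{N}_{H_2}$. Composing with the projection $\pi\colon T^{*}M\times\mathbb{T}\to M\times\mathbb{T}$ on both sides of each equality then delivers the projected equalities $A_{H_1}=A_{H_2}$ and $N_{H_1}=N_{H_2}$, so all four assertions of the corollary are obtained at once.

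There is no substantive obstacle here: the corollary is a formal consequence of Theorem 4 together with the fact that $\kappa$ is determined by the energy level. The only care required is in checking that the set-theoretic formulas defining $\stackrel{\scriptscriptstyle\ast}{A}^e_{H}$ and $\stackrel{\scriptscriptstyle\ast}{N}^e_{H}$ in Section~1 indeed sit as graphs above $\stackrel{\scriptscriptstyle\ast}{A}_{H}$ and $\stackrel{\scriptscriptstyle\ast}{N}_{H}$ respectively, which is transparent from those definitions.
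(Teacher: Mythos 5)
Your proof is correct, and it matches the paper's (implicit) argument: the paper states the corollary immediately after Theorem 4 without further comment, precisely because the extended sets sit as graphs over the non-extended ones via the energy relation determining $\kappa$, so applying the forgetful map and then $\pi$ to both sides of $\stackrel{\scriptscriptstyle\ast}{A}^e_{H_1}=\stackrel{\scriptscriptstyle\ast}{A}^e_{H_2}$ and $\stackrel{\scriptscriptstyle\ast}{N}^e_{H_1}=\stackrel{\scriptscriptstyle\ast}{N}^e_{H_2}$ gives all four equalities. You have simply made that routine observation explicit.
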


\section{A conjecture}
For an autonomous Tonelli Hamiltonian $H$, Bernard \cite{Be2} proved the existence of $C^{1,1}$ critical subsolution, i.e.,
$$H(q,d_qu)=\alpha_{H}(0)$$
admits at least one  $C^{1,1}$ subsolution.  Based on this result, still in the autonomous case, it was proved that two associated Hamilton-Jacobi equations have at least one  common $C^{1,1}$ (critical) subsolution, if the two  Tonelli Hamiltonians we considered  are commuting \cite{CL},\cite{Za}. For a time-periodic Tonelli Hamiltonian,  Massart \cite{Mas} has proved the existence of a $C^1$ critical subsolution. Also, in a work in preparation \cite{Be3}, a student of Bernard proved the existence of $C^{1,1}$ critical subsolution. Namely, Hamilton-Jacobi equation
$$H(q,d_q\phi,[t])+d_t\phi=\alpha_H(0)$$
has at least one $C^{1,1}$ subsolution. In view of these previous results, it is natural to pose the following conjecture, as a counterpart of Theorem 6 in \cite{CL}:

\begin{conj}
Let $H_1, H_2$ be two Tonelli Hamiltonians. If $[H_1,H_2]=0$, then there exists at least one  common  $C^{1,1}$ subsolution for
$$H_1(q,d_q\phi, [t])+d_t\phi=\alpha_{H_1}(0)$$
and
$$H_2(q,d_q\phi, [t])+d_t\phi=\alpha_{H_2}(0).$$
\end{conj}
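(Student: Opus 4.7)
The plan is to start from a $C^{1,1}$ critical subsolution of $H_1$ (provided by the time-periodic version of Bernard's theorem \cite{Be3}), regularize it with respect to $H_2$ via a Lasry-Lions-type inf-sup smoothing realized through the Lax-Oleinik semigroup, and then use the commutation from Theorem 1 together with the extended symplectic framework developed in Section 3 to show that the resulting $C^{1,1}$ function is a subsolution of both Hamilton-Jacobi equations simultaneously. This mirrors the autonomous strategy in \cite{CL}, with the additional bookkeeping forced by the absence of an energy integral.

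Concretely, fix $u \in C^{1,1}(M \times \mathbb{T})$ with $H_1(q, d_q u, [t]) + d_t u \le \alpha_{H_1}(0)$. For small $s > 0$, consider the double Lax-Oleinik regularization
$$u^s := T^+_{H_2, 0, s}\, T^-_{H_2, 0, s}\, u$$
(or a suitable symmetric/iterated variant). Standard Lasry-Lions theory suggests that, for $s$ small enough, $u^s$ is simultaneously semi-concave and semi-convex with uniform constants and hence $C^{1,1}$, and that $u^s$ is a critical subsolution of the equation for $H_2$. The heart of the proof would then be to show that $u^s$ is still a subsolution for $H_1$. Here I would argue via Theorem 1: since the families $T^\pm_{H_1, \cdot, \cdot}$ and $T^\pm_{H_2, \cdot, \cdot}$ commute on $C^0(M, \mathbb{R})$, the characterization $T^-_{H_1, t_1, t_2} u^s \ge u^s$ of $H_1$-subsolutions reduces, by commuting operators and using the order-preserving property, to $T^-_{H_1, t_1, t_2} u \ge u$, which holds because $u$ is an $H_1$-subsolution. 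At the differential level, this corresponds to the fact that $\tilde H_1$ is preserved along the flow of $\tilde H_2$ (and vice versa), exactly the property exploited via Stokes' formula on the surface spanned by $X_{\tilde H_1}, X_{\tilde H_2}$ in the proof of Theorem 1.

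The main obstacle is making this last step rigorous \emph{at the level of pointwise differential inequalities}: one must verify that the Lasry-Lions regularization with respect to $H_2$ preserves the $H_1$-subsolution inequality at \emph{every} differentiability point of $u^s$, not merely in an integrated sense. In the autonomous case of \cite{CL, Za} the energy integral collapses this to a one-line observation, since $H_1$ is constant along orbits of $X_{H_2}$. In the time-periodic case, the analogous statement is only available after passing to the extended Hamiltonians $\tilde H_1, \tilde H_2$ on $T^*(M \times \mathbb{T})$, and one must track the conjugate momentum $\kappa$ throughout the regularization, in the spirit of Proposition 3.4 and of the extended-phase-space arguments of Sections 4--6. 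A secondary difficulty is obtaining \emph{uniform} $C^{1,1}$ bounds as $s \to 0^+$, or through an iterative scheme if one wishes to symmetrize between $H_1$ and $H_2$; the failure of convergence of Lax-Oleinik semigroups in the time-periodic setting noted in the introduction (\cite{FM}) may force one to replace clean limits by $\liminf$-type constructions analogous to those in Propositions 3.2 and 3.3, and reconciling this with the preservation of $C^{1,1}$ regularity is where I expect the essential new work to lie.
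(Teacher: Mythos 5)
What you have been asked to prove is stated in the paper as a \emph{conjecture}, not a theorem: Section 7 poses it as an open problem ``as a counterpart of Theorem 6 in \cite{CL},'' and the paper contains no proof of it. So there is no argument in the paper to compare yours against; the question is only whether your sketch would close the conjecture, and by your own admission it does not.

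Your overall strategy --- start from a $C^{1,1}$ subsolution for $H_1$ (using the cited time-periodic version of Bernard's theorem), Lasry--Lions regularize with respect to $H_2$, and then invoke the commutation of Lax--Oleinik operators from Theorem~1 to preserve the $H_1$-subsolution inequality --- is the natural adaptation of the autonomous argument in \cite{CL},\cite{Za}, and the paper explicitly flags this as the intended route. But there are genuine gaps beyond the two you list. First, a domain mismatch: the operators $T^\pm_{H_2,t_1,t_2}$ defined in the paper act on $C^0(M,\mathbb{R})$, not on functions of $(q,[t])$, so writing $u^s = T^+_{H_2,0,s}T^-_{H_2,0,s}u$ for $u\in C^{1,1}(M\times\mathbb{T})$ does not parse as stated; you would need a family of time-slice regularizations $\phi^s(\cdot,[t]) := T^+_{H_2,t,t+s}T^-_{H_2,t-s,t}\big(\phi(\cdot,[t-s])\big)$ and then prove joint $C^{1,1}$ regularity in $(q,t)$, which is precisely where the extended variable $\kappa$ and the absence of an energy integral bite. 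Second, the claim that ``$T^-_{H_1,t_1,t_2}u^s\geq u^s$ reduces, by commuting operators and order-preservation, to $T^-_{H_1,t_1,t_2}u\geq u$'' hides the fact that $T^+$ is not order-reversing in the direction you would need: $T^+_{H_2}$ is order-preserving, and commuting $T^-_{H_1}$ past the \emph{pair} $T^+_{H_2}T^-_{H_2}$ requires the commutation relations for $T^+$ with $T^-$ across the two Hamiltonians, which Theorem~1 does not provide (it only gives $T^-$ with $T^-$ and $T^+$ with $T^+$). These two points, together with the uniform $s\to 0^+$ bounds you already flag, are exactly why the paper leaves this as a conjecture. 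You have correctly identified the outline and the right obstacles, but the proposal is not a proof.
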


\section{A final remark}
For the time-periodic case, in general we cannot obtain $[H_1+H_2,H_1]=0$ or $[H_1+H_2,H_2]=0$ under the condition that $[H_1,H_2]=0$,  due to the influence of time factor. So, we cannot obtain the  dynamical information of $H_1+H_2$ directly, as in the autonomous case \cite{CL}. It seems to the author that when $[H_1,H_2]=0$,  the relations between $\stackrel{\scriptscriptstyle
\ast}{A}_{H_1+H_2} \text{\,\, and\,\,}\stackrel{\scriptscriptstyle
\ast}{A}_{H_1}(=\stackrel{\scriptscriptstyle
\ast}{A}_{H_2}),$
$\stackrel{\scriptscriptstyle
\ast}{N}_{H_1+H_2} \text{\,\, and\,\,}\stackrel{\scriptscriptstyle
\ast}{N}_{H_1}(=\stackrel{\scriptscriptstyle
\ast}{N}_{H_2}),$
$B_{H_1+H_2}$ and $B_{H_1}(=B_{H_2})$, $b_{H_1+H_2}$ and $b_{H_1}(=b_{H_2})$ are  more complicated than the ones in the autonomous case.

\begin{ack}
 This paper was completed when the  author
visited Albert-Ludwigs-Universit\"{a}t  Freiburg as a postdoctoral researcher, supported
by a fellowship from the Alexander Von Humboldt Foundation. The
 author would like to thank Professor V. Bangert and Mathematisches Institut der Albert-Ludwigs-Universit\"{a}t Freiburg for hospitality. The author also would like to thank Dr. L. Zhao for some helpful discussions.
\end{ack}

\end{document}